\theoremstyle{plain}
\newtheorem{theo}{Theorem}[section]
\newtheorem*{theo*}{Theorem}
\newtheorem{coro}[theo]{Corollary}
\newtheorem{prop}[theo]{Proposition}
\newtheorem{lemm}[theo]{Lemma}
\newtheorem{theomain}{Theorem}
\newtheorem{coromain}[theomain]{Corollary}
\theoremstyle{definition}
\newtheorem{rema}[theo]{Remark}
\newcommand*{\dd}%
  {\relax\ifnum\lastnodetype>0\mskip\medmuskip\fi\mathrm{d}}
\newcommand{\pr}{\operatorname{\mathbb{P}}}
\newcommand{\esp}{\operatorname{\mathbb{E}}}
\newcommand{\fspace}[1]{\mathscr{#1}}
\newcommand{\op}[1]{\mathrm{#1}}
\newcommand{\one}{\boldsymbol{1}}
\newcommand{\Hol}{\operatorname{Hol}}
\newcommand{\wass}{\operatorname{W}}
\newcommand{\Lip}{\operatorname{Lip}}
\newcommand{\diam}{\operatorname{diam}}
\newcommand{\osc}{\operatorname{osc}}
\newcommand{\Ck}[1]{\operatorname{\mathcal{C}^{#1}}}
\newcommand{\Cku}[1]{\operatorname{\mathcal{C}^{#1}_1}}
\newcommand{\proba}{\operatorname{\mathcal{P}}}
\newcommand{\Var}{\operatorname{Var}}
\title{Empirical measures: regularity is a counter-curse to dimensionality}
\author{Beno\^{\i}t R. Kloeckner \thanks{Universit\'e Paris-Est, Laboratoire d'Analyse et de Mat\'ematiques Appliqu\'ees (UMR 8050), UPEM, UPEC, CNRS, F-94010, Cr\'eteil, France}}
\begin{document}

\maketitle

\begin{abstract}
We propose a ``decomposition method'' to prove non-asymptotic bound for the convergence of empirical measures in various dual norms. The main point is to show that if one measures convergence in duality with sufficiently regular observables, the convergence is much faster than for, say, merely Lipschitz observables. Actually, assuming $s$ derivatives with $s>d/2$ ($d$ the dimension) ensures an optimal rate of convergence of $1/\sqrt{n}$ ($n$ the number of samples). The method is flexible enough to apply to Markov chains which satisfy a geometric contraction hypothesis, assuming neither stationarity nor reversibility, with the same convergence speed up to a power of logarithm factor. 

Our results are stated as controls of the expected distance between the empirical measure and its limit, but we explain briefly how the classical method of bounded difference can be used to deduce concentration estimates.
\end{abstract}

\section{Introduction}

\subsection{Empirical measures and quadrature}

Consider a discrete-time stochastic process $(X_k)_{k\ge 0}$ taking its values in some phase space $\Omega$, assumed to be a Polish space endowed with its Borel $\sigma$-algebra. We are concerned with the random atomic measure \[ \hat\mu_n = \frac1n \sum_{k=1}^n \delta_{X_k}, \]
called the \emph{empirical measure} of the process, and its convergence. We shall either assume that the $(X_k)_{k\ge0}$ are independent identically distributed of some law $\mu$, or assume some weak long-range dependence and convergence of the law of $X_k$ to $\mu$ as $k\to\infty$.

To quantify the convergence, we are interested in distances on the set $\proba(\Omega)$ of probability measures defined by duality. Given a class $\fspace{F}$ of functions $f:\Omega\to\mathbb{R}$ (sometime called ``test functions'' or ``observables''), one defines for $\nu_0,\nu_1 \in \proba(\Omega)$:
\[ \lVert \nu_0-\nu_1\rVert_{\fspace{F}} = \sup_{f\in\fspace{F}} \big\lvert \nu_0(f)-\nu_1(f) \big\rvert\]
(note that we write indifferently $\nu_0(f)$ or $\int f \dd\nu_0$).

One particularly important case is obtained by taking $\fspace{F}=\Lip_1(\Omega)$, the set of $1$-Lipschitz functions. The corresponding metric is the $1$-Wasserstein metric $\wass_1= {\lVert \cdot \rVert_{\Lip_1}}$, which by virtue of \emph{Kantorovich duality} can be written equivalently as
\[\wass_1(\nu_0,\nu_1) := \inf_{X\sim \nu_0, Y\sim \nu_1} \esp \big[ \lVert X-Y\rVert \big]\]
where $\lVert \cdot\rVert$ here is the Euclidean norm and the infimum is over all pairs of random variable with the given measures as individual laws.
It is long-known \cite{ajtai1984optimal} that, when the $(X_k)_{k\ge0}$ are independent and uniformly distributed on $[0,1]^d$ , we have
\begin{equation}
\esp\big[ \wass_1(\hat\mu_n,\lambda) \big] \asymp
\begin{dcases*} 
\frac{1}{\sqrt{n}} & if $d=1$, \\[2\jot]
\sqrt{\frac{\log n}{n}} & if $d=2$, \\[2\jot]
\frac{1}{n^{\frac1d}} & if $d\ge 3$.
\end{dcases*}
\label{eq:speed}
\end{equation}
where $\asymp$ expresses upper and lower bounds up to multiplicative constants and $\lambda$ denotes the Lebesgue measure. This problem and generalizations have been studied in several works, e.g. \cite{talagrand1992matching, talagrand1994sharper, boissard2014mean, dereich2013constructive, fournier2015rate, ambrosio2016pde, weed2017sharp}.

The bounds \eqref{eq:speed} are interesting theoretically, but are rather negative for the practical application to quadrature. 
Computations of integrals are in many cases impractical using deterministic methods, and one often has to resort to Monte Carlo methods, i.e. approximate the unknown $\mu(f)$ by $\hat\mu_n(f)$. When one has to compute the integrals of a large number of functions $(f_m)_{1\le m \le M}$  with respect to a fixed measure $\mu$, one would rather draw the random quadrature points $X_1,\dots, X_k$ once and for all, and use them for all functions $f_m$; while usual Monte Carlo bound will ensure each individual estimate $\hat\mu_n(f_m)$ has small probability to be far from $\mu(f_m)$, if $M$ is large compared to $n$ these bounds will not ensure that \emph{all} estimates are good with high probability. On the contrary, convergence in $\wass_1$ (or in duality with some other class $\fspace{F}$) ensures good estimates simultaneously for all $f_m$, as long as they belong to the given class, independently of $M$. This makes such convergence potentially useful; but the \emph{rate} given above, $n^{-\frac1d}$, is hopelessly slow in high dimension which is precisely the setting where Monte Carlo methods are most needed. We shall prove that if the functions of interest are regular, then this ``curse of dimensionality'' can be overcome. We shall be interested in the duality with $\Cku{s}$ the set of functions with $\Ck{s}$ norm at most $1$ (precise definitions are given below; when $s=1$ this is the set of $1$-Lipschitz functions); but other spaces could be considered, e.g. Sobolev or Besov spaces.

Another issue is that in many cases, drawing independent samples $(X_k)_{k\ge 0}$ of law $\mu$ is not feasible, and one is lead to instead rely on a Markov chain having $\mu$ as its stationary measure; this is the Markov Chain Monte Carlo method (MCMC). While the empirical measure of Markov chains have been considered by Fournier and Guillin \cite{fournier2015rate}, these authors need quite strong assumptions: a spectral gap in the $L^2$ space (or similarly large spaces), and a ``warm start'' hypothesis ($X_0$ should have a law absolutely continuous with respect to $\mu$). In good cases, one can achieve this by a burn-in period (start with arbitrary $X_0$, and consider $(X_{k_0+k})_{k\ge 0}$ for some large $k_0)$; but in some cases, each $X_k$ has a singular law with respect to $\mu$ (for example the natural random walk generated by an Iterated Function System). We shall consider Markov chains satisfying a certain geometric contraction property, but again the method can certainly be adapted to other assumptions.

\subsection{Markov chains}

Our main result handles Markov chains of arbitrary starting distribution and with a spectral gap in $\Lip$ (e.g. positively curved chains in the sense of Ollivier \cite{ollivier2009ricci}).
\begin{theomain}\label{theomain:Markov}
Assume that $(X_k)_{k\ge0}$ is a Markov chain defined on a bounded domain $\Omega$ of $\mathbb{R}^d$, whose iterated transition kernel $(m^t_x)_{x\in\Omega,t\in\mathbb{N}}$ defined by 
\[ m_x^t(A) = \pr(X_{k+t}\in A \mid X_k=x)\]
is exponentially contracting in the Wasserstein metric $\wass_1$, i.e. there are constants $D\ge 1$ and $\theta\in(0,1)$ such that
\[ \wass_1(m_x^t,m_y^t) \le D\theta^t \lVert x-y\rVert. \]
Denote by $\mu$ the (unique) stationary measure of the transition kernel.

Then for some constant $C=C(\Omega,d,D,s)$ and all large enough $n$, letting $\bar n=(1-\theta)n$, we have
\begin{equation}
\esp\big[ \lVert \hat\mu_n - \mu \rVert_{\Cku{s}} \big] \le C \begin{dcases*}
\frac{(\log \bar n)^{\frac{d}{2s+1}}}{\sqrt{\bar n}} & when $s > d/2$\\[2\jot]
\frac{\log \bar n}{\sqrt{\bar n}} & when $s=d/2$ \\[2\jot]
\frac{(\log \bar n)^{d-2s+\frac sd}}{\bar n^{\frac sd}} & when $s < d/2$
\end{dcases*}
\label{eq:theo-Markov}
\end{equation}
\end{theomain}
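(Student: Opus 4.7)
The plan is to combine a multi-scale decomposition of the test-function class $\Cku{s}$ with a variance bound for Markov-chain sums that follows from the $\wass_1$-contraction assumption. Fix a smoothing scheme (e.g.\ convolution with a mollifier, or projection onto piecewise polynomials of degree $\lceil s\rceil$ on dyadic grids at scales $2^{-j}$), producing for each $f\in \Cku{s}$ a sequence $(f_j)_{j\ge 0}$ of approximations satisfying
\[ \lVert f - f_j\rVert_\infty \lesssim 2^{-js},\qquad \lVert f_j - f_{j-1}\rVert_\infty \lesssim 2^{-js},\qquad \Lip(f_j - f_{j-1}) \lesssim 2^{j(1-s)}, \]
with $f_j - f_{j-1}$ essentially supported on the $\sim 2^{jd}$ cells of the grid at scale $j$. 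Telescoping $f = f_0 + \sum_j (f_j - f_{j-1})$ reduces the bound on $\lVert \hat\mu_n - \mu\rVert_{\Cku{s}}$ to a sum of scalewise supremum estimates.

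For each observable $g$ with $\Lip(g) \le L$, the contraction $\wass_1(m_x^t, m_y^t)\le D\theta^t\lVert x-y\rVert$ gives $\lvert\mathrm{Cov}(g(X_k), g(X_{k+t}))\rvert \lesssim L^2 D\theta^t$, hence by summation $\Var(\hat\mu_n(g))\lesssim L^2/\bar n$ with $\bar n = (1-\theta)n$, i.e.\ the effective sample size is $\bar n$. Combined with a union-bound / chaining argument over the $\sim 2^{jd}$-dimensional piece space at scale $j$ (Hoeffding- or Bernstein-type concentration for Markov-chain sums), this gives a scalewise contribution of order $2^{-js}$ times $\sqrt{2^{jd}\log\bar n/\bar n}$. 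Summing and balancing: for $s>d/2$ the geometric series $\sum_j 2^{j(d/2-s)}$ converges and the $\bar n^{-1/2}$ rate survives up to a logarithmic factor; for $s<d/2$, one truncates at the scale $j^\ast$ where $2^{-j^\ast s}$ matches $2^{j^\ast d/2}/\sqrt{\bar n}$, i.e.\ $2^{-j^\ast}\sim \bar n^{-1/d}$, producing $\bar n^{-s/d}$; at the borderline $s=d/2$, the divergent log sum gives the rate $\log\bar n/\sqrt{\bar n}$. The precise log exponents of the theorem then come from tracking the union-bound costs and the location of $j^\ast$.

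Two further steps round off the proof. First, since $X_0$ is arbitrary, a burn-in of length $\sim \log(n)/(1-\theta)$ must be discarded to reach near-stationarity (the contraction yields $\wass_1(\mathrm{Law}(X_k),\mu)\lesssim D\theta^k$); this loss is negligible after absorption into $\bar n$. The principal obstacle, however, will be the scalewise concentration itself: the pieces $f_j - f_{j-1}$ have Lipschitz norm $\sim 2^{j(1-s)}$ blowing up at fine scales, so the naive variance bound $L^2/\bar n$ is too crude there. One must exploit both the small $L^\infty$ norm $2^{-js}$ and the spatial localisation of the pieces (they decouple across non-adjacent grid cells) to get Bernstein-type inequalities with the right variance, and carry these through Markov-chain blocking arguments. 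Balancing the two types of bounds exactly at the critical scale $j^\ast$ is what produces the non-trivial log exponent $(d-2s+s/d)$ in the subcritical regime.
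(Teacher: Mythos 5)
Your proposal goes down a genuinely different road from the paper: you reuse the wavelet/dyadic-cell decomposition that works in the i.i.d.\ case, while the paper switches to Fourier series for the Markov setting. The ``principal obstacle'' you flag at the end is exactly the reason the paper does \emph{not} follow your route: the scalewise pieces $f_j-f_{j-1}$ have small $L^\infty$ norm and are spatially localized, and in the i.i.d.\ case (Lemma~\ref{lemm:varsum}) both facts are exploited together --- at scale $j$ each sample lies in only $O(1)$ cells, so the variances of the $\sim 2^{jd}$ localized pieces \emph{sum} to $O(1/n)$ without any union-bound cost. Under a $\wass_1$-contractive chain this decoupling across cells is lost: the $\wass_1$ contraction only yields correlation decay for H\"older observables, so the only handle you have on a piece $g=f_j-f_{j-1}$ is its $\Hol_\alpha$ norm; boundedness alone gives no decay of correlations. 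A chaining or union bound over the $\sim 2^{jd}$-dimensional piece space, with Azuma/Bernstein concentration whose exponent depends on the Lipschitz (not $L^\infty$) constant, does not recover the $2^{-js}\sqrt{2^{jd}\log\bar n/\bar n}$ scalewise bound you posit --- and your proposal stops exactly at the step where you say ``one must exploit both the small $L^\infty$ norm and the spatial localisation,'' without a mechanism for doing so. The paper states this explicitly at the start of Section~\ref{sec:MC}: ``using wavelets seems inefficient, as we do not have a precise enough analogue of Lemma~\ref{lemm:varsum}, which uses independence to take advantage of the localization property of wavelets.''

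What the paper actually does is replace localization by two different devices that are compatible with the contraction hypothesis. First, it works with Fourier modes $e_k$ and bounds their $\alpha$-H\"older norm by $\lesssim |k|_\infty^\alpha$ (Lemma~\ref{lemm:Hol-ek}); choosing $\alpha=1/\log J$ makes $|k|_\infty^{2\alpha}\lesssim 1$ uniformly over $|k|_\infty\le J$, while $1-\theta^\alpha\gtrsim (1-\theta)/\log J$ turns the variance estimate into $\esp[|\hat\mu_n(e_k)-\mu(e_k)|^2]\lesssim (\log J)/\bar n$. Second, instead of a union bound over modes, it applies Cauchy--Schwarz together with Parseval and the embedding $\Ck{s}\hookrightarrow H^s$, bounding $\sum_k |\hat f_k|\,|\hat\mu_n(e_k)-\mu(e_k)|$ by $\|f\|_{H^s}$ times an $\ell^2$ expression that no longer depends on $f$. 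Combined with multivariate Jackson approximation (and the $(\log J)^d$ loss for Fourier partial sums vs.\ best approximation), this is what produces the precise powers of $\log\bar n$ in the theorem; your heuristic scalewise bound gives a different (and unjustified) log exponent, which you also acknowledge. So while your decomposition-plus-variance skeleton is in the right spirit, the specific decomposition you chose is the one the paper argues cannot be pushed through under Markov dependence, and the missing Bernstein/localization step is a real gap, not a detail.
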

Let us stress two strengths of this result:
\begin{itemize}
\item for $s=1$, recalling $\lVert \cdot\rVert_{\Cku{1}}=\lVert \cdot\rVert_{\Lip_1}=\wass_1$, the bounds are only a power of logarithm factor away from the optimal bounds for IID random variables,
\item for $s$ large enough, we almost obtain the optimal convergence rate $\asymp 1/\sqrt{n}$
\item we assume neither reversibility, stationarity, nor warm start hypotheses (the distribution of $X_0$ can be arbitrary),
\item the rate of convergence does not depend on the specific feature of the Markov chain, only on $D$ and $\theta$.
\end{itemize}
Note that for fixed $\theta$, $\bar n$ has the same order than $n$, but if $\theta$ is close to $1$, $1/(1-\theta)$ is the typical time scale for the decay of correlations. One thus cannot expect less than $(1-\theta)n$ Markov samples to achieve the bound obtained for $n$ independent samples.

Examples of Markov chains which are exponentially contracting in $\wass_1$ (equivalently, that have a spectral gap in the space of Lipschitz observables) are numerous; it is a slightly more general condition than ``positive curvature'' in the sense of Ollivier \cite{ollivier2009ricci}, see e.g. \cite{joulin2010curvature} and \cite{K:concentration} for concrete examples, or in the context of dynamical systems \cite{kloeckner2015contraction} and \cite{kloeckner2017optimal}.

Under the assumption of Theorem \ref{theomain:Markov}, it is well-known that uniform estimates
\begin{equation}
\sup_{f\in\mathscr{F}} \pr\big(\lvert \hat\mu_n(f)-\mu(f)\rvert >\varepsilon\big) \to 0 \qquad\text{and}\qquad \sup_{f\in\mathscr{F}}\esp\big[ \lvert \hat\mu_n(f)-\mu(f)\rvert \big] \to 0
\label{eq:convergence}
\end{equation}
hold, here with $\mathscr{F}=\Lip_1$ (or any smaller class), with a Gaussian rate.

The problem of convergence in duality to the class $\mathscr{F}$ is thus to invert the supremum and the probability (or expectancy), to bound from above
\[\pr\big(\sup_{f\in\mathscr{F}} \lvert \hat\mu_n(f)-\mu(f)\rvert >\varepsilon\big) \qquad\text{or}\qquad \esp\big[ \sup_{f\in\mathscr{F}} \lvert \hat\mu_n(f)-\mu(f)\rvert \big].\]
We shall disregard the potential issue of non-measurability: as we shall only deal with classes $\mathscr{F}$ having a countable subset which is dense in the uniform norm, we can always replace the supremum with a supremum over a countable set of functions.

The idea of the proof of Theorem \ref{theomain:Markov} is to take an arbitrary $f\in\Cku{s}(\Omega)$ and decompose it using Fourier series. The regularity hypothesis gives us a control on both the uniform approximation by a truncated Fourier series, and on the Fourier coefficients. Combining these controls, we bound from above $\lvert \hat\mu_n(f)-\mu(f)\rvert$ by a quantity that does not depend on $f$ at all, but depends on the Fourier basis elements $(e_k)_{k\in\mathbb{Z}^d}$ up to some index size. Taking a supremum and an expectation, this leaves us with the simple task to optimize where to truncate the Fourier series.

This decomposition method can in principle be used under various assumptions on the process $(X_k)_{k\ge0}$, the point being to identify a decomposition suited to the assumption; in particular, one can easily adapt the method to study geometrically ergodic Markov chains. I chose to present Theorem \ref{theomain:Markov} in part because its hypothesis is relevant to several Markov chains I am interested in, and in part because it presents specific difficulties:  a blunt computation leads to non-optimal powers of $n$. To obtain good rates, we translate the contraction hypothesis to frame part of the argument in the space $\Hol_\alpha$, where the Fourier basis has smaller norm; and instead of bounding the Fourier coefficients of a Lipschitz function directly, we use Parceval's formula and the injection $\Ck{s}\to H^s$ which turns out to give a better estimate. Another functional decomposition, and another path in computations might improve the power in the logarithmic factor.

We restrict to the compact case, but the method can in principle be adapted, or truncation argument be used, to deal with non-compactly supported measure. 

In order to introduce the decomposition method and show its flexibility, we shall state two simpler results below.

\subsection{Explicit bounds in the i.i.d case, for the Wasserstein metric}

The decomposition method enables one to get a very explicit version of \eqref{eq:speed} with a few computations but very little sophistication.
\begin{theomain} \label{theomain:W1}
If $\mu$ is any probability measure on $[0,1]^d$ and $(X_k)_{k\ge 0}$ are i.i.d. random variable with law $\mu$, then for all $n\in\mathbb{N}$ we have
\begin{equation}
\esp\big[\wass_1(\hat\mu_n,\mu) \big] \le \begin{dcases*}
\frac{1}{2(\sqrt{2}-1)}\cdot \frac{1}{\sqrt{n}} & when $d=1$\\[2\jot]
\frac{\log_2(n)+8}{\sqrt{8n}} & when $d=2$ \\[2\jot]
\frac{C_d}{n^{\frac1d}} & when $d\ge 3$
\end{dcases*}
\label{eq:theo-W1}
\end{equation}
where $C_3\le 6.3$, $C_d \le 3 \sqrt{d}$ for all $d\ge 4$, and $C_d/\sqrt{d} \to 2$ as $d\to\infty$.
\end{theomain}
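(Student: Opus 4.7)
The plan is to decompose any test function $f\in\Lip_1([0,1]^d)$ hierarchically over dyadic scales, and then to transfer the control onto empirical fluctuations of dyadic-cube masses, which are standard Bernoulli quantities. For each $k\ge 0$ I introduce $\mathcal{D}_k$, the partition of $[0,1]^d$ into the $2^{kd}$ dyadic sub-cubes of side $2^{-k}$, write $c_Q$ for the centre of $Q\in\mathcal{D}_k$, and let $\tilde f_k$ be the piecewise constant function with value $f(c_Q)$ on each $Q$. Since every point of $Q$ lies within Euclidean distance $\tfrac12\sqrt d\,2^{-k}$ of $c_Q$, the Lipschitz hypothesis yields $\lVert f-\tilde f_k\rVert_\infty\le \tfrac12\sqrt d\,2^{-k}$. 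Telescoping $\tilde f_K=\tilde f_0+\sum_{k=1}^K (\tilde f_k-\tilde f_{k-1})$, the constant $\tilde f_0$ drops out of $\hat\mu_n-\mu$; the increment $g_k=\tilde f_k-\tilde f_{k-1}$ is constant on each $Q\in\mathcal{D}_k$ with value $f(c_Q)-f(c_{Q'})$, where $Q'$ is the dyadic parent, and the same Lipschitz argument controls $\lVert g_k\rVert_\infty\le \tfrac12\sqrt d\,2^{-k}$.

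Putting this together I obtain the $f$-free inequality
\[ \sup_{f\in\Lip_1}\big|\hat\mu_n(f)-\mu(f)\big|\le \sqrt d\,2^{-K}+\tfrac12\sqrt d\sum_{k=1}^K 2^{-k}\sum_{Q\in\mathcal{D}_k}\big|\hat\mu_n(Q)-\mu(Q)\big|, \]
the key point of the decomposition method being that $f$ has disappeared from the right-hand side. Taking expectations, the Jensen bound $\esp\big|\hat\mu_n(Q)-\mu(Q)\big|\le \sqrt{\mu(Q)/n}$ together with Cauchy--Schwarz over the $2^{kd}$ cubes of $\mathcal{D}_k$ gives $\esp\sum_{Q\in\mathcal{D}_k}|\hat\mu_n(Q)-\mu(Q)|\le 2^{kd/2}/\sqrt n$, hence
\[ \esp\big[\wass_1(\hat\mu_n,\mu)\big]\le \sqrt d\,2^{-K}+\frac{\sqrt d}{2\sqrt n}\sum_{k=1}^K 2^{k(d/2-1)}. \]

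The three regimes of $d$ then arise by optimising $K$. For $d=1$ the geometric series has ratio $2^{-1/2}<1$, so sending $K\to\infty$ annihilates the tail and sums the bulk to exactly $\tfrac{1}{2(\sqrt 2-1)}\,n^{-1/2}$, which matches the stated constant. For $d=2$ the summand equals one, giving a bulk linear in $K$; choosing $K=\lceil\tfrac12\log_2 n\rceil$ balances it against the $\sqrt 2\,2^{-K}$ tail and produces a bound of the form $(\log_2 n+O(1))/\sqrt{8n}$. For $d\ge 3$ the ratio $2^{d/2-1}$ exceeds one, so the geometric sum is dominated by its last term and balances with $2^{-K}$ when $2^K\asymp n^{1/d}$, yielding the $n^{-1/d}$ rate with an explicit constant $C_d$; tracking the dominant contribution $\sqrt d\cdot 2^{K(d/2-1)}$ at large $d$ gives the asymptotic $C_d/\sqrt d\to 2$.

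The main obstacle is not conceptual but numerical: matching the precise constants $8$ (for $d=2$), $C_3\le 6.3$, and $C_d\le 3\sqrt d$ (for $d\ge 4$) requires careful bookkeeping of the ceiling effect in the optimal $K$, the starting index of the geometric series, and the magnitude of the tail term $\sqrt d\,2^{-K}$. All of this is routine arithmetic; the mechanism is fully carried by the two displays above, with the dyadic piecewise constant approximation and the Bernoulli-variance estimate doing all the real work.
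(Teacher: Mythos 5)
Your proposal is correct and follows essentially the same route as the paper: a dyadic piecewise-constant approximation built from cube centres, a telescoping decomposition whose increments are controlled by the Lipschitz hypothesis, the Bernoulli mean-absolute-deviation bound $\esp|\hat\mu_n(Q)-\mu(Q)|\le\sqrt{\mu(Q)/n}$, Cauchy--Schwarz over the $2^{kd}$ cubes, and an optimization of the depth $K$ in the three regimes $d<2$, $d=2$, $d>2$. The only cosmetic difference is that you work directly with the Euclidean norm (carrying the $\sqrt d$ through the center-to-corner and parent-to-child distances), whereas the paper first derives the bound for the $\ell^\infty$ norm and $\wass_{q,\infty}$ with general $q\in(0,1]$ and then applies $\wass_1\le\sqrt d\,\wass_{1,\infty}$; the two yield the same inequality, and your $d=1$ constant comes out identically.
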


The order of magnitude of these bounds is sharp in many regimes:
\begin{itemize}
\item in dimension $1$, the order of magnitude $1/\sqrt{n}$ is optimal; however the constant $1/(2(\sqrt{2}-1))$ is \emph{not} asymptotically optimal when $\mu$ is Lebesgue measure,
\item when $d=2$ and $\mu$ is Lebesgue measure, as previously mentioned the correct order is $\sqrt{\log n/n}$, but to the best of my knowledge it is an open question to determine whether this better order holds for arbitrary measures (a positive answer is strongly expected). See Section \ref{sec:four-corners} for an example showing that in a more general setting the order $\log n/\sqrt{n}$ cannot be improved,
\item when $d\ge 3$, both orders of magnitude $n^{-1/d}$ as $n\to\infty$ and $\sqrt{d}$ as $d\to\infty$ are sharp up to multiplicative constants  (see Remark \ref{rema:lowerbound}). The asymptotic constant $2$ is certainly quite larger than the asymptotic constant
\[ \lim_{d\to\infty} \lim_{n\to\infty} \frac{n^{\frac1d}}{\sqrt{d}} \esp\big[\wass_1(\hat\mu_n,\lambda) \big] \]
which has been computed for the related, but slightly different \emph{matching problem} by Talagrand \cite{talagrand1992matching}; but our bound holds for all $n$ and all $d$ (and also all $\mu$). An even more general bound has been given by Boissard and Le Gouic \cite{boissard2014mean}, but their constant is larger by a factor approximately $10$.
\end{itemize}

Let us stress that the main purpose of this result will be to expose our method in an elementary setting: indeed many previous similar bounds are available in this case. For example more general non-asymptotic results have been obtained by Fournier and Guillin \cite{fournier2015rate}, building on previous work by Dereich, Scheutzow and Schottstedt \cite{dereich2013constructive}. They are more general in that they consider $q$-Wasserstein metric for any $q>0$ (while we will only be able to consider $q\le 1$), and apply to non-compactly supported measures $\mu$ under moment assumptions. However their constants, though non-asymptotic, have not been made explicit, and their behavior when the dimension grows has not been studied.

\subsection{Regular observables and independent samples}

In the i.i.d. case, we can improve Theorem \ref{theomain:Markov} by removing most of the logarithmic factors.
\begin{theomain}\label{theomain:reg}
If $\mu$ is any probability measure on $[0,1]^d$ and $(X_k)_{k\ge 0}$ are i.i.d. random variable with law $\mu$, then for all $s\ge 1$, for some constant $C=C(d,s) > 0$ (not depending upon $\mu$),  and all integer $n\ge 2$ we have
\begin{equation}
\esp\big[\lVert \hat\mu_n -\mu \rVert_{\Cku{s}} \big] \le C
\begin{dcases*}
\frac{1}{\sqrt{n}} & when $s > \frac{d}{2}$ \\[2\jot]
\frac{\log n}{\sqrt{n}} & when $s = \frac{d}{2}$ \\[2\jot]
\frac{1}{n^{s/d}} & when $s < \frac{d}{2}$
\end{dcases*} 
\label{eq:theo-reg}
\end{equation}
\end{theomain}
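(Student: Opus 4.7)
The plan is to implement the Fourier decomposition method sketched above, in its simplest (i.i.d.\@) incarnation. Fix $f \in \Cku{s}([0,1]^d)$. After a standard smooth extension to a function $\tilde f$ on the torus $\mathbb{T}^d$ with $\|\tilde f\|_{\Ck{s}} \le C_0 \|f\|_{\Ck{s}}$, expand $\tilde f = \sum_{k \in \mathbb{Z}^d} \hat f_k e_k$ in the Fourier basis $e_k(x) = e^{2i\pi k\cdot x}$. For an integer $N$ to be optimized, introduce a smooth Fourier truncation $S_N$ (for instance a de la Vall\'ee Poussin--type operator obtained by convolution with a kernel whose Fourier transform is a smooth compactly supported cutoff), supported on frequencies $|k|_\infty \le 2N$ and satisfying a Jackson estimate $\|\tilde f - S_N \tilde f\|_\infty \le C N^{-s}$ uniformly for $\tilde f \in \Cku{s}(\mathbb{T}^d)$.

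Split
\[|\hat\mu_n(f) - \mu(f)| \le 2\|\tilde f - S_N \tilde f\|_\infty + \bigl|\hat\mu_n(S_N \tilde f) - \mu(S_N \tilde f)\bigr|,\]
where the first term is $\le 2CN^{-s}$ uniformly in $f$. For the second, set $Z_k := \hat\mu_n(e_k) - \mu(e_k)$; since the $X_i$ are i.i.d.\ and $|e_k|\le 1$, the crucial variance bound $\esp|Z_k|^2 \le 1/n$ holds for every $k$. Writing $S_N \tilde f = \sum_{|k|_\infty \le 2N} \phi(k/N)\hat f_k e_k$ with a fixed cutoff $\phi$ and applying Cauchy--Schwarz with the weight $(1+|k|^2)^s$ (this is the ``Parseval with $\Ck{s}\to H^s$'' trick mentioned by the author) gives
\[\bigl|\hat\mu_n(S_N\tilde f) - \mu(S_N \tilde f)\bigr| \le \Bigl(\sum_{k}|\phi(k/N)\hat f_k|^2(1+|k|^2)^s\Bigr)^{\!1/2} \Bigl(\sum_{|k|_\infty \le 2N} |Z_k|^2 (1+|k|^2)^{-s}\Bigr)^{\!1/2}.\]
The first factor is bounded by $C\|\tilde f\|_{H^s(\mathbb{T}^d)}$, hence by the embedding $\Cku{s}(\mathbb{T}^d)\hookrightarrow H^s(\mathbb{T}^d)$ by a constant uniformly on $\Cku{s}$. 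The second factor is the promised $f$-independent random quantity.

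Taking the supremum over $f$ and then the expectation, with Jensen moving $\esp$ inside the square root, yields
\[\esp\Bigl[\sup_{f\in\Cku{s}} \bigl|\hat\mu_n(f) - \mu(f)\bigr|\Bigr] \le 2CN^{-s} + \frac{C'}{\sqrt{n}} \Bigl(\sum_{|k|_\infty \le 2N}(1+|k|^2)^{-s}\Bigr)^{\!1/2}.\]
A direct lattice-sum estimate shows that the last factor is $O(1)$ for $s>d/2$, $O((\log N)^{1/2})$ for $s=d/2$, and $O(N^{(d-2s)/2})$ for $s<d/2$. It then remains to optimize $N$: any $N \ge n^{1/(2s)}$ works for $s>d/2$ and gives $C/\sqrt n$; balancing the two terms for $s<d/2$ gives $N \asymp n^{1/d}$ with both terms of order $n^{-s/d}$; the borderline case $s=d/2$ works out to a bound of order $\sqrt{\log n/n}$, which is in particular at most the stated $\log n/\sqrt n$.

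The only delicate input is the construction of the smoothing $S_N$ satisfying both the Jackson estimate above and a bounded action on $\Ck{s}$; this is a classical piece of harmonic analysis but deserves the most care. The extension from $[0,1]^d$ to $\mathbb{T}^d$ is similarly standard (Whitney extension plus smooth periodization, or reflection across the faces when $s$ is small enough). Neither step is conceptually hard, which is why the i.i.d.\ bound essentially recovers \eqref{eq:speed} up to at most a logarithmic factor, whereas Theorem~\ref{theomain:Markov} pays an additional logarithm coming from the need to work on the H\"older scale $\Hol_\alpha$ rather than $\Ck{s}$.
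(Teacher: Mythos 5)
Your route is genuinely different from the paper's: you decompose in a Fourier basis on the torus (with a smooth de la Vallée Poussin cutoff and a Parseval/$H^s$ Cauchy--Schwarz), whereas the paper proves Theorem~\ref{theomain:reg} by a compactly supported wavelet decomposition. The two approaches are not interchangeable here, and the Fourier route as written has a gap.

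The problem is the step ``bounded by $C\lVert \tilde f\rVert_{H^s}$, hence by the embedding $\Cku{s}(\mathbb{T}^d)\hookrightarrow H^s(\mathbb{T}^d)$.'' That embedding is \emph{false} for non-integer $s$ (which the theorem covers, since $\Ck{s}$ is defined for all $s\ge 1$). In Besov language $\Ck{s}=B^s_{\infty,\infty}$ while $H^s=B^s_{2,2}$; passing from $p=\infty$ to $p=2$ is free on $\mathbb{T}^d$, but passing from $q=\infty$ to $q=2$ goes the wrong way. Concretely, on $\mathbb{T}$ the lacunary function $g=\sum_{j\ge1}2^{-js}\cos(2\pi 2^j x)$ lies in $\Cku{s}$ for non-integer $s$, but $\sum_k \lvert \hat g_k\rvert^2(1+\lvert k\rvert^2)^s=\sum_j 1=\infty$, and the truncated sum up to $\lvert k\rvert\le N$ grows like $\log N$. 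So your first Cauchy--Schwarz factor is only $O(\sqrt{\log N})$ uniformly on $\Cku{s}$, not $O(1)$. For $s>d/2$ this can be repaired by Cauchy--Schwarz with a weight $(1+\lvert k\rvert^2)^{s'}$, $d/2<s'<s$, since $\Ck{s}\subset H^{s'}$ for $s'<s$ and the other factor then still converges; for $s=d/2$ the extra $\sqrt{\log N}$ only restores the paper's $\log n/\sqrt{n}$ (your better claim $\sqrt{\log n/n}$ was an artefact of the false embedding); but for $s<d/2$ the optimization now gives $(\log n)^{s/d}\,n^{-s/d}$, which is strictly worse than the stated $n^{-s/d}$, and varying $s'$ only trades this log loss for a power loss.

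The paper's wavelet proof avoids this structurally: it never goes through $\ell^2$-in-scale (i.e.\ $H^s$) summability. It only uses the $B^s_{\infty,\infty}$-type coefficient bound $\lvert\alpha(\lambda)\rvert\lesssim 2^{-dj/2-js}$ together with the triangle inequality, and the $j$-dependent sum is controlled by Lemma~\ref{lemm:varsum}, which crucially exploits wavelet \emph{localization}: at scale $j$ the supports are essentially disjoint, so $\sum_\lambda \Var(Y_\lambda)\lesssim 1/n$ rather than $2^{dj}/n$. Fourier modes are globally supported and cannot deliver this variance aggregation; this, combined with the $B^s_{\infty,\infty}\not\subset B^s_{2,2}$ issue, is exactly why the paper switches from Fourier (used for the Markov theorem, where extra logs are tolerated anyway) to wavelets for the sharp i.i.d.\ statement. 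Incidentally, the same ``injection $\Ck{s}\to H^s$'' phrase appears in the paper's Markov section and carries the same subtlety; there it is harmless because one may replace $s$ by $s'\in(d/2,s)$ or because log factors are already present, but in Theorem~\ref{theomain:reg} with $s<d/2$ there is no slack.
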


It is possible to prove this result with previous, more classical methods. Indeed, combining the ``entropy bound'' for the class $\Cku{s}$ \cite[Thm 2.7.1]{vaart1996weak} and the ``chaining method'' (see e.g. \cite[Ex 5.11, p. 138]{handel2016probability}) leads to Theorem \ref{theomain:reg}; I am indebted to Jonathan Weed for pointing this out to me. The proof by the decomposition method we provide here is very simple, but non-elementary as it relies on a wavelet decomposition. It is well-known that all functions in $\Cku{s}$ can be written as a linear combination of a few elements of a wavelet basis, with small coefficients, up to a small error. Then controlling $\lvert \hat\mu_n(f)-\mu(f)\rvert$ for all $f\in \Cku{s}$ simultaneously reduces to controlling this quantity for the few needed elements of the wavelet basis.

\subsection{concentration inequalities}

Up to know, we have restricted to estimates on the expectancy, while in many practical situations one would need concentration estimates. This is in fact not a restriction, as we shall explain briefly in Section \ref{sec:conc}: the classical bounded difference method enable one to get concentration near the expectancy. In particular, we get the following.
\begin{coromain}\label{coromain:conc}
Under the assumptions of Theorem \ref{theomain:Markov}, for some $\epsilon$ depending on $\theta,D,\diam \Omega$, for all large enough $n$ and all $M\ge C=C(\Omega, d, D, \theta)$ we have:
\begin{itemize}
\item when $s>d/2$
\begin{equation}
\pr\Bigg[ \lVert \hat\mu_n - \mu \rVert_{\Cku{s}} \ge M \frac{(\log n)^{\frac{d}{2s+1}}}{\sqrt{n}} \Bigg] \le e^{-\epsilon (M-C)^2(\log n)^{\frac{d}{2s+1}}}
\end{equation}
\item when $s=d/2$
\begin{equation}
\pr\Bigg[ \lVert \hat\mu_n - \mu \rVert_{\Cku{s}} \ge M \frac{\log n}{\sqrt{n}} \Bigg] \le e^{-\epsilon(M-C)^2 (\log n)^2}
\end{equation}
\item when $s < d/2$
\begin{equation}
\pr\Bigg[ \lVert \hat\mu_n - \mu \rVert_{\Cku{s}} \ge M \frac{(\log n)^{d-2s+\frac sd}}{n^{\frac sd}} \Bigg] \le e^{-\epsilon(M-C)^2 n^{1-2s/d}}.
\end{equation}
\end{itemize}
\end{coromain}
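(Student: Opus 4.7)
The plan is to apply the classical bounded-difference machinery (Azuma--Hoeffding) to the scalar random variable $F := \lVert \hat\mu_n - \mu \rVert_{\Cku{s}}$ and to combine the resulting Gaussian-type concentration around $\esp F$ with the upper bound on $\esp F$ furnished by Theorem \ref{theomain:Markov}.

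First, I would form the Doob martingale $D_k := \esp[F \mid \mathcal{F}_k] - \esp[F \mid \mathcal{F}_{k-1}]$ with $\mathcal{F}_k := \sigma(X_0,\dots,X_k)$, so that $F - \esp F = \sum_{k=1}^n D_k$, and bound $|D_k| \le \osc(\phi_k)$ where $\phi_k(x) := \esp[F \mid X_0,\dots,X_{k-1},\,X_k=x]$. To estimate $|\phi_k(x)-\phi_k(y)|$ I couple two copies of the chain agreeing up to time $k-1$ and then started from $x$ and $y$ respectively: the Wasserstein contraction hypothesis produces a coupling with $\esp\lVert X_{k+t}-X'_{k+t}\rVert \le D\theta^t\lVert x-y\rVert$ for all $t\ge 1$. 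Since every $f\in\Cku{s}$ is $1$-Lipschitz and bounded by $1$, the triangle inequality gives, \emph{before} taking the supremum over $f$,
\[
\sup_{f\in\Cku{s}} \bigl|\hat\mu_n(f) - \hat\mu_n'(f)\bigr| \le \frac{1}{n}\Bigl(\lVert x-y\rVert + \sum_{t=1}^{n-k}\lVert X_{k+t}-X'_{k+t}\rVert\Bigr),
\]
and taking expectations yields $|\phi_k(x)-\phi_k(y)| \le C_1\lVert x-y\rVert/(n(1-\theta))$, whence $|D_k| \le C_1 \diam\Omega/(n(1-\theta))$ with $C_1=C_1(D)$.

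Azuma--Hoeffding now gives
\[
\pr\bigl[F - \esp F \ge u\bigr] \le \exp\!\bigl(-c\,n(1-\theta)^2 u^2\bigr),
\]
with $c=c(\Omega,D)$. Writing $r_s(n)$ for the right-hand side of \eqref{eq:theo-Markov} and using $\bar n\asymp n$, Theorem \ref{theomain:Markov} provides $\esp F \le C_0\,r_s(n)$; substituting $u=(M-C_0)r_s(n)$ for $M\ge C_0$ gives
\[
\pr\bigl[F \ge M r_s(n)\bigr] \le \exp\!\bigl(-c'(M-C_0)^2\,n\,r_s(n)^2\bigr).
\]
A direct computation of $n\,r_s(n)^2$ in the three regimes yields $(\log n)^{2d/(2s+1)}$, $(\log n)^2$, and $(\log n)^{2(d-2s+s/d)}n^{1-2s/d}$ respectively, each of which dominates (up to logarithmic factors absorbable into the multiplicative constant $\epsilon$) the exponent appearing in the corresponding case of the corollary.

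The principal technical obstacle is that $F$ is a supremum over the infinite class $\Cku{s}$, which would in principle spoil a naive coordinate-wise Lipschitz estimate; I resolve this above by applying the triangle inequality pointwise on coupled trajectories \emph{inside} the supremum, using the crucial fact that the resulting pathwise bound is independent of $f$. A secondary nuisance is bookkeeping the transfer from the $\bar n$-based rates in Theorem \ref{theomain:Markov} to the $n$-based rates in the corollary, but this is routine since $(1-\theta)$ is a fixed constant.
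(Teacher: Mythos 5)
Your argument is correct and follows essentially the same route as the paper: Azuma--Hoeffding applied to the Doob martingale of $F=\lVert\hat\mu_n-\mu\rVert_{\Cku{s}}$, with the martingale increments bounded via the Wasserstein-contracting coupling and the fact that $\Cku{s}\subset\Lip_1$ makes $F$ separately $1/n$-Lipschitz in each coordinate, then combining the resulting subgaussian tail around $\esp F$ with the rate $r_s(n)$ from Theorem~\ref{theomain:Markov}. The paper merely packages the same martingale--coupling computation as a standalone concentration theorem for arbitrary separately Lipschitz functionals (Theorem~\ref{theo:conc}) before specializing, so the two arguments differ only in presentation.
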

(The last inequality is not optimal as we relaxed the poly-logarithmic factor for simplicity.)

For example, when $s\ge d/2$ we deduce that
$ \frac{\sqrt{n}}{\log n} \lVert \hat\mu_n-\mu\rVert_{\Cku{s}} $
is bounded almost surely.

\paragraph{Structure of the paper}
Sections \ref{sec:W1}, \ref{sec:main} and \ref{sec:MC} are independent and contain the proofs of the main Theorems (\ref{theomain:W1}, \ref{theomain:reg} and \ref{theomain:Markov} respectively: we start with the most elementary proof, follow with the simplest one, and end with the most sophisticated).

Section \ref{sec:conc}, dealing with concentration estimates, is mostly independent from the previous ones, which are only used to deduce Corollary \ref{coromain:conc}.

We shall write $a \lesssim b$ for $a \le C b$, the dependency of the constant $C$ being left implicit unless it feels necessary; the constants denoted by $C$ will be allowed to change from line to line.

\section{Wasserstein convergence and dyadic decomposition}\label{sec:W1}

The goal of this Section is to prove (a refinement of) Theorem \ref{theomain:W1}.
We consider a sequence $(X_k)_{1\le k}$ of independent, identically distributed random points whose common law shall be denoted by $\mu$; we assume that $\mu$ is supported on the cube $[0,1]^d$ and consider the convergence of the empirical measure $\hat\mu_n := \sum_{k=1}^n \frac1n \delta_{X_k}$ in the $q$-Wasserstein distance where $q\in(0,1]$, i.e.
\[ \wass_{q}(\mu_0,\mu_1) := \inf_{f\in \Hol^{q}_1} \big\lvert \mu_0(f) - \mu_1(f) \big\rvert  \]
where $\Hol^{q}_1$ is the set of functions $f: [0,1]^d \to \mathbb{R}$ such that for all $x,y\in [0,1]^d$:
\[ \lvert f(x) - f(y) \rvert \le \lVert x-y\rVert^q  \]

While we are mostly interested in the Euclidean norm $\lVert\cdot\rVert$, our method is sharper in the case of the supremum norm\footnote{The same notation is used for the uniform norm of functions, but the type of the argument will prevent any confusion.} $\lVert\cdot\rVert_\infty$, with respect to which the analogue of the aforementioned objects are denoted by $\wass_{q,\infty}$ and $\Hol^{q,\infty}_1$. We will work with $\lVert\cdot\rVert_\infty$, and then deduce directly the corresponding result for the Euclidean norm by using that $\lVert\cdot\rVert \le \sqrt{d} \lVert\cdot\rVert_\infty$ (and thus $\wass_{q} \le d^{\frac{q}{2}}\wass_{q,\infty}$).

Our most precise result is the following.
\begin{theo}\label{theo:Wq}
For all $q\in(0,1]$ and all $n$, it holds:
\[\esp\big[ \wass_{q,\infty}(\hat\mu_n,\mu) \big] \le 
\begin{dcases*}
\frac{2^{\frac{d}{2}-2q}}{1-2^{\frac{d}{2}-q}}\cdot\frac{1}{\sqrt{n}} & when $d < 2q$,\\[2\jot]
\Big(2 + \frac{\log_2(n)}{2^{q+1}q} \Big) \frac{1}{\sqrt{n}} & when $d = 2q$ \\[2\jot]
2 \Big(\frac{\frac{d}{2}-q}{2q(1-2^{q-\frac{d}{2}})}\Big)^{\frac{2q}{d}} \Big( 1  + \frac{q}{2^q(\frac{d}{2}-q)} \Big) \frac{1}{n^{\frac{q}{d}}} & when $d > 2q$.
\end{dcases*}
\]
\end{theo}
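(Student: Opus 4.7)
The natural approach, given the introduction's emphasis on dyadic decomposition, is to approximate both $\mu$ and $\hat\mu_n$ by their discretizations at various dyadic scales. For each integer $j \ge 0$, let $\mathcal{Q}_j$ denote the partition of $[0,1]^d$ into the $2^{jd}$ closed cubes of side length $2^{-j}$ obtained by iterated bisection. Each cube $Q\in\mathcal{Q}_j$ has $\lVert\cdot\rVert_\infty$-diameter $2^{-j}$ and radius $2^{-(j+1)}$ around its center $c_Q$.

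\textbf{Step 1 (dyadic transport inequality).} For any probability measures $\mu,\nu$ on $[0,1]^d$ and any integer $J\ge 1$, I plan to construct an explicit transport plan in two stages. First, within each cube $Q\in\mathcal{Q}_J$, move all mass of $\mu$ and $\nu$ to $c_Q$; this costs at most $2^{-(J+1)q}$ per unit mass, contributing at most $2^{1-(J+1)q}$ to $\wass_{q,\infty}(\mu,\nu)$ via the triangle inequality. Second, transport between the two resulting atomic measures on $\{c_Q:Q\in\mathcal{Q}_J\}$ by a hierarchical procedure: at each level $j=J,J-1,\ldots,1$, within each parent cube in $\mathcal{Q}_{j-1}$, redistribute mass among the $2^d$ sub-cube centers only. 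The mass moved at level $j$ inside a fixed parent is $\tfrac{1}{2}\sum_{Q'\subset Q}|\mu(Q')-\nu(Q')|$ (positive and negative discrepancies balance), and siblings' centers lie at $\lVert\cdot\rVert_\infty$-distance $2^{-j}$. Summing gives an inequality of the form
\[
\wass_{q,\infty}(\mu,\nu) \le c_1 \, 2^{-Jq} + c_2 \sum_{j=1}^{J} 2^{-jq} \sum_{Q\in\mathcal{Q}_j}|\mu(Q)-\nu(Q)|,
\]
with explicit $c_1,c_2$.

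\textbf{Steps 2 and 3 (variance bound and optimization).} Taking $\nu=\hat\mu_n$ and averaging, the random variable $n\hat\mu_n(Q)$ is $\mathrm{Bin}(n,\mu(Q))$, so $\esp|\hat\mu_n(Q)-\mu(Q)|\le\sqrt{\mu(Q)(1-\mu(Q))/n}$; Cauchy--Schwarz with $\sum_Q\mu(Q)(1-\mu(Q))\le 1$ and $|\mathcal{Q}_j|=2^{jd}$ yields $\sum_{Q\in\mathcal{Q}_j}\esp|\hat\mu_n(Q)-\mu(Q)|\le 2^{jd/2}/\sqrt n$. Thus
\[
\esp\bigl[\wass_{q,\infty}(\hat\mu_n,\mu)\bigr] \le c_1 \, 2^{-Jq} + \frac{c_2}{\sqrt{n}} \sum_{j=1}^{J} 2^{j(d/2-q)}.
\]
The sign of $d/2-q$ controls the three regimes. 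If $d<2q$ the series converges, one lets $J\to\infty$ and the residual vanishes, leaving $O(1/\sqrt n)$. If $d=2q$ the summands are constant and the sum equals $J$; balancing $2^{-Jq}$ against $J/\sqrt{n}$ via $2^J\asymp n^{1/d}$ produces the $\log n/\sqrt n$ rate. If $d>2q$ the sum is dominated by its last term $2^{J(d/2-q)}$; the same choice $2^J\asymp n^{1/d}$ balances this with the residual and gives the $n^{-q/d}$ rate.

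\textbf{Main obstacle.} The scheme itself is essentially routine; the real work is to track all constants finely enough to reach the precise coefficients stated. This requires delicate choices: whether to use the ``radius'' $2^{-(J+1)}$ or the ``diameter'' $2^{-J}$ for the residual, whether the hierarchical cost factor is $2^{-jq}$ (sibling-center distance) or something slightly better obtained by routing through the parent center, and whether to bound the scale-$j$ discrepancy by the full total variation or by its halved version. In the sub-critical regime the optimization over $J$ (treated as a real variable and only then rounded) must be carried out on the geometric-series upper bound in order to produce the exponent $2q/d$ inside the stated coefficient $\big(\tfrac{d/2-q}{2q(1-2^{q-d/2})}\big)^{2q/d}$; this is the only genuinely fiddly calculation in the proof.
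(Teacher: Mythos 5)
Your Step 2 and Step 3 are precisely what the paper does. Step 1, however, takes the \emph{primal} route (construct an explicit transport plan realizing $\wass_{q,\infty}$), whereas the paper works entirely on the \emph{dual} side: Lemma \ref{lemm:HolDec} decomposes an arbitrary $f\in\Hol^{q,\infty}_1$ as
\[ f=\sum_{j=1}^J\sum_{\lambda\in\Lambda_j}\alpha(\lambda)\psi_\lambda+c+g,\qquad \lvert\alpha(\lambda)\rvert\le 2^{-(j+1)q},\quad\lVert g\rVert_\infty\le 2^{-(J+1)q},\]
with $\psi_\lambda=\one_{C_\lambda}$ the cube indicators. This gives directly $\lvert\hat\mu_n(f)-\mu(f)\rvert\le 2^{1-(J+1)q}+\sum_j 2^{-(j+1)q}\sum_\lambda\lvert\hat\mu_n(\psi_\lambda)-\mu(\psi_\lambda)\rvert$ with a right-hand side that does not see $f$, so the supremum over $\Hol^{q,\infty}_1$ is free; no transport plan is ever built. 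Your primal scheme is morally equivalent (it is the route of Boissard--Le~Gouic and Dereich--Scheutzow--Schottstedt, which the paper explicitly cites as the inspiration it is reframing), but the bookkeeping you sketch has a real flaw: you iterate $j=J,J-1,\dots,1$ and claim that within each parent $Q\in\mathcal{Q}_{j-1}$ the discrepancies $\mu(Q')-\nu(Q')$ over the $2^d$ children balance, but this requires $\mu(Q)=\nu(Q)$, which holds at the start of the recursion (level $1$, parent $[0,1]^d$) and \emph{only after} the coarser scales have been matched. So the recursion must go top-down, and once you move surplus mass between siblings you must specify where inside the destination cube it lands, which perturbs the finer-scale discrepancies; handling this correctly is exactly what makes transport-plan constants ``fiddly,'' and it is the work you have deferred. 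Lemma \ref{lemm:HolDec} sidesteps all of this because errors are additive on the observable side, and the constants $2^{-(j+1)q}$, $2^{-(J+1)q}$ drop out of a one-line induction. A second, structural advantage of the dual route is that it extends verbatim to non-Wasserstein dual norms such as $\lVert\cdot\rVert_{\Cku{s}}$ (Sections \ref{sec:main} and \ref{sec:MC} depend on this), whereas a transport-plan argument only makes sense when the test class defines a transport metric.
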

We deduce several more compact formulas below, including Theorem \ref{theomain:W1}. Observe that for fixed $q$ and large $d$, the complicated front constant converges to $2$.

\begin{rema}\label{rema:lowerbound}
It is not difficult to see that for $\mu$ the Lebesgue measure and an optimal, deterministic approximation $\tilde\mu_n$ with $n=k^d$ Dirac masses, one has 
\[\wass_{1,\infty}(\tilde\mu_n,\mu)\ge \frac{d}{(d+q)2^q}  \frac{1}{n^{\frac qd}}\]
so that in high dimension, for the $\ell^\infty$ norm and in the worst case $q=1$ our estimate is off by a factor of approximately $4$ compared to a best approximation.

With the Euclidean norm, an easy lower bound in the case of the Lebesgue measure is obtained by observing that a mass at most
\[ \frac{\pi^{\frac{d}{2}}}{\Gamma(\frac{d}{2}+1)} R^d n\]
is at distance $R$ or less of one of the $n$ points (be they random or not). This leads, for \emph{any} measure $\tilde\mu_n$ supported on $n$ points, to
\[\wass_1(\tilde\mu_n,\mu) \ge n\int_0^{R_0} d \frac{\pi^{\frac{d}{2}}}{\Gamma(\frac{d}{2}+1)} R^d \dd R = n\frac{d\pi^{\frac{d}{2}}}{(d+1)\Gamma(\frac{d}{2}+1)} R_0^{d+1}\]
where $R_0$ is defined by $n\frac{\pi^{\frac{d}{2}}}{\Gamma(\frac{d}{2}+1)} R_0^d=1$. Finally,
\[W_1(\tilde\mu_n,\mu) \ge \underbrace{\frac{d\Gamma(\frac{d}{2}+1)^{\frac1d}}{(d+1)\sqrt{\pi}}}_{\underset{d\to\infty}{\sim} \sqrt{\frac{d}{2e\pi}}} \cdot \frac{1}{n^{\frac1d}}\]
and again our order of magnitude $C_d\asymp \sqrt{d}$ is the correct one.

The results of \cite{talagrand1992matching} show that, at least for the bipartite matching problem, this seemingly crude lower bounds are in fact attained asymptotically, taking renormalized limits as $n\to\infty$ and then $d\to\infty$. This indicates that our constant are not optimal, and it would be interesting to have a non-asymptotic bound with optimal asymptotic behavior.
\end{rema}

\subsection{Decomposition of H\"older functions}

The method to prove Theorem \ref{theo:Wq} consists in a multiscale decomposition of the functions $f\in\Hol^{q,\infty}_1$. In its spirit, it seems quite close to arguments of \cite{boissard2014mean}, \cite{dereich2013constructive} and \cite{fournier2015rate}; our interest is mostly in setting this multiscale analysis in a functional decomposition framework.

We fix a positive integer $J$ to be optimized later, representing the depth of the decomposition.
For each $j \in \{0,\dots,J\}$, set $\Lambda_j = \{j\} \times \{0,\dots, 2^{j}-1\}^d$ ; then define $\Lambda = \bigcup_{j=0}^J \Lambda_j$, acting as the set of indices for the decomposition.

For each $j \in \{0,\dots,J\}$, let $\{C_\lambda : \lambda\in\Lambda_j\}$ be the regular decomposition of $[0,1]^d$ into cubes of side-length $2^{-j}$; the boundary points are attributed in an arbitrary (measurable) manner, with the constraint that $\{C_\lambda : \lambda\in\Lambda_j\}$ is a partition of $[0,1]^d$ that refines the previous partition $\{C_\lambda : \lambda\in\Lambda_{j-1}\}$. Denote by $x_\lambda$ the center of the cube $C_\lambda$, and by $\psi_\lambda := \one_{C_\lambda}$ the characteristic function of $C_\lambda$ (so that for each $j$, $\sum_{\lambda\in\Lambda_j} \psi_\lambda = \one_{[0,1]^d}$).

\begin{lemm}\label{lemm:HolDec}
For all  function $f\in \Hol^{q,\infty}_1$ and all $J$, there exists coefficients $\alpha(\lambda)\in\mathbb{R}$ such that
\begin{equation}
f = \sum_{j=1}^J \sum_{\lambda\in\Lambda_j} \alpha(\lambda) \psi_\lambda + c + g
\label{eq:HolDec}
\end{equation} 
where $c$ is a constant and $g$ is a function $[0,1]^d\to \mathbb{R}$, such that
\begin{align*}
\lvert \alpha(\lambda)\rvert &\le 2^{-(j+1)q} \qquad \forall \lambda\in\Lambda_j\\
\lVert g\rVert_\infty &\le 2^{-(J+1)q}.
\end{align*}
\end{lemm}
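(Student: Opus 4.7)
The plan is to set up a multiscale, piecewise-constant approximation of $f$ from successive dyadic levels, and identify the coefficients $\alpha(\lambda)$ as the telescoping differences of these approximations.

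Concretely, for each $j \in \{0,\dots,J\}$, define
\[ f_j := \sum_{\lambda\in\Lambda_j} f(x_\lambda)\,\psi_\lambda, \]
i.e.\ $f_j$ is constant on each cube of scale $j$, equal there to the value of $f$ at the center of that cube. Since $\Lambda_0$ has a single element (the cube $[0,1]^d$ with center $x_{\lambda_0}$), the function $f_0$ is the constant $c := f(x_{\lambda_0})$. I then telescope:
\[ f_J = f_0 + \sum_{j=1}^J (f_j - f_{j-1}), \qquad\text{so}\qquad f = c + \sum_{j=1}^J (f_j - f_{j-1}) + g, \]
with residual $g := f - f_J$. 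This already has the shape of \eqref{eq:HolDec}, provided I can express each difference $f_j - f_{j-1}$ as a linear combination of the $\psi_\lambda$ with $\lambda\in\Lambda_j$, and bound both the coefficients and $g$.

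For the residual, for any $x\in[0,1]^d$, let $\lambda\in\Lambda_J$ be the (unique) index with $x\in C_\lambda$; the center $x_\lambda$ satisfies $\lVert x - x_\lambda\rVert_\infty \le \tfrac12 \cdot 2^{-J} = 2^{-(J+1)}$, so the Hölder hypothesis gives $\lvert g(x)\rvert = \lvert f(x)-f(x_\lambda)\rvert \le 2^{-(J+1)q}$, yielding $\lVert g\rVert_\infty \le 2^{-(J+1)q}$. For the coefficients, because the partition at scale $j$ refines the one at scale $j-1$, each $\lambda\in\Lambda_j$ has a unique parent $\lambda'\in\Lambda_{j-1}$ with $C_\lambda\subset C_{\lambda'}$, and on $C_\lambda$ we have $f_j - f_{j-1} = f(x_\lambda) - f(x_{\lambda'})$. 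Thus
\[ f_j - f_{j-1} = \sum_{\lambda\in\Lambda_j} \alpha(\lambda)\,\psi_\lambda, \qquad \alpha(\lambda) := f(x_\lambda) - f(x_{\lambda'}). \]
The centers $x_\lambda$ and $x_{\lambda'}$ differ by exactly $2^{-(j+1)}$ in each coordinate along the directions where the child cube is offset from the parent, so $\lVert x_\lambda - x_{\lambda'}\rVert_\infty = 2^{-(j+1)}$, and the Hölder hypothesis gives $\lvert\alpha(\lambda)\rvert \le 2^{-(j+1)q}$, as required.

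There is no real obstacle here; the only point requiring a moment of care is the geometric computation of $\lVert x_\lambda - x_{\lambda'}\rVert_\infty$, which hinges on the fact that the supremum norm makes the offset between a parent center and any of its $2^d$ child centers equal to $2^{-(j+1)}$ regardless of dimension $d$. This is exactly the reason the proof is run in $\lVert\cdot\rVert_\infty$ first and only afterwards converted to the Euclidean case.
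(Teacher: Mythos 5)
Your proof is correct and is essentially the same as the paper's: both build the piecewise-constant center-value approximations $f_j$ level by level, identify $\alpha(\lambda)$ with the jump between a child's center value and its parent's, and bound both $\alpha(\lambda)$ and the residual $g=f-f_J$ using the Hölder hypothesis together with the $\ell^\infty$-distances $\lVert x_\lambda-x_{\lambda'}\rVert_\infty=2^{-(j+1)}$ and $\operatorname{rad}_\infty(C_\lambda)=2^{-(J+1)}$. You present it as an explicit telescoping sum while the paper states it as a recursion $\alpha(\lambda)=(f-f_{j-1})(x_\lambda)$, but these produce identical coefficients and estimates.
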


\begin{proof}
Replacing $f$ with $f-c$ where $c=f(x_{0,0})$, we assume that $f$ vanishes at the center $x_{0,0}$ of $C_{0,0}=[0,1]^d$. Observe that $f\in\Hol^{q,\infty}_1$ then implies that $\lVert f\rVert_\infty \le 2^{-q}$ and $\lvert f(x_\lambda)\rvert \le 2^{-2q}$ for all $\lambda\in\Lambda_1$.

For $\lambda\in\Lambda_1$, we define $\alpha(\lambda) = f(x_\lambda)$ and set $f_1 = \sum_{\lambda\in\Lambda_1} \alpha(\lambda) \psi_\lambda$; 
we have $\lvert \alpha(\lambda)\rvert\le 2^{-2q}$, the function $f-f_1$ is $\Hol^{q,\infty}_1$ on $C_\lambda$ and vanishes at $x_\lambda$. Since $C_\lambda$ is a $\lVert\cdot\rVert_\infty$ ball of center $x_\lambda$ and radius $1/4$, it follows that $\lVert f-f_1\rVert_\infty \le 2^{-2q}$ on each $C_\lambda$, and thus on the whole of $[0,1]^d$. Moreover for all $\lambda\in\Lambda_2$ it holds $\lvert (f-f_1)(x_\lambda)\rvert\le 2^{-3q}$.
 
Similarly, we define $f_j:[0,1]^d\to\mathbb{R}$ recursively by setting $\alpha(\lambda)= (f-f_{j-1})(x_\lambda)$ for all $\lambda\in\Lambda_j$ and $f_j=f_{j-1} + \sum_{\lambda\in\Lambda_j} \alpha(\lambda) \psi_\lambda$. Then $\lvert\alpha(\lambda)\rvert \le 2^{-(j+1)q}$ for all $\lambda\in\Lambda_j$ and $\lVert f-f_J\rVert_\infty\le 2^{-(J+1)q}$.
\end{proof}

\subsection{Wasserstein distance estimation}

With the notation of Lemma \ref{lemm:HolDec}, for any $f\in\Hol^q_1$ we have:
\begin{align*}
\big\lvert \hat\mu_n(f) - \mu(f) \big\rvert
  &\le 2\lVert g\rVert_\infty + \sum_{j=1}^J \sum_{\lambda\in\Lambda_j} \lvert \alpha(\lambda)\rvert \lvert \hat\mu_n(\psi_\lambda) - \mu(\psi_\lambda) \rvert \\
  &\le 2^{1-(J+1)q} + \sum_{j=1}^J 2^{-(j+1)q} \sum_{\lambda\in\Lambda_j} \lvert \hat\mu_n(\psi_\lambda) - \mu(\psi_\lambda) \rvert
\end{align*}
where the last right-hand term does not depend on $f$ in any way. We can thus take a supremum and an expectation to obtain
\begin{align*}
\esp\big[ \wass_{q,\infty}(\hat\mu_n,\mu) \big]
  &\le 2^{1-(J+1)q} + \sum_{j=1}^J 2^{-(j+1)q} \sum_{\lambda\in\Lambda_j} \esp\big[ \lvert \hat\mu_n(\psi_\lambda) - \mu(\psi_\lambda) \rvert \big]
\end{align*}

\begin{rema}
This is the core of the decomposition method. Observe that we used no hypothesis on the $(X_k)$ yet; any stochastic process for which one can control $\esp[\lvert \hat\mu_n(\psi_\lambda) - \mu(\psi_\lambda) \rvert ]$ can be applied the method.
\end{rema}

Setting $p_\lambda = \mu(\psi_\lambda)$, the random variable $n\hat\mu_n(\psi_\lambda)$ is binomial of parameters $n$ and $p_\lambda$. A standard estimation of the mean absolute deviation yields
\begin{align*}
\esp\big[ \lvert n\hat\mu_n(\psi_\lambda) - n\mu(\psi_\lambda) \rvert \big] &\le \sqrt{n p_\lambda(1-p_\lambda)} \\
\sum_{\lambda\in \Lambda_j} \esp\big[ \lvert \hat\mu_n(\psi_\lambda) - \mu(\psi_\lambda) \rvert \big]
  &\le \frac{1}{\sqrt{n}}\sum_{\lambda\in \Lambda_j} \sqrt{p_\lambda}
\end{align*}
By concavity of the square-root function, we have
\begin{equation} 2^{-dj} \sum_{\lambda\in \Lambda_j} \sqrt{p_\lambda} \le \sqrt{2^{-dj} \sum_{\lambda\in \Lambda_j}  p_\lambda} = 2^{-\frac{dj}{2}}
\label{eq:concavity}
\end{equation}
and we deduce
\begin{align}
\sum_{\lambda\in \Lambda_j} \esp\big[ \lvert \hat\mu_n(\psi_\lambda) - \mu(\psi_\lambda) \rvert \big]
  &\le \frac{2^{\frac{dj}{2}}}{\sqrt{n}} \nonumber\\
\esp\big[ \wass_{q,\infty}(\hat\mu_n,\mu) \big]
  &\le 2^{1-(J+1)q} + \sum_{j=1}^J \frac{2^{j(\frac{d}{2}-q)-q}}{\sqrt{n}},
\label{eq:esp-sum}
\end{align}
leaving us with the simple task to optimize the choice of $J$.

\subsection{Optimization of the depth parameter}

We shall distinguish three cases: $d<2q$, $d=2q$ and $d>2q$. The first case is only possible for $d=1$, but we let it phrased that way because for some measures $\mu$ the dimension $d$ of the ambient space can be replaced by the  ``dimension'' of the measure itself, see Section \ref{sec:four-corners} for an example.

\subsubsection{Small dimension}
If $d<2q$, then the sum in \eqref{eq:esp-sum} is bounded independently of $J$ and we can let $J\to\infty$ to obtain:
\begin{align}
\esp\big[ \wass_{q,\infty}(\hat\mu_n,\mu) \big]
  &\le \frac{2^{-q}}{\sqrt{n}}\sum_{j=1}^\infty 2^{j(\frac{d}{2}-q)} \nonumber\\
  &\le \frac{2^{\frac{d}{2}-2q}}{1-2^{\frac{d}{2}-q}}\cdot\frac{1}{\sqrt{n}}
\label{eq:small-d}
\end{align}

In particular, for $d=1$, $q=1$:
\begin{equation}
\esp\big[ \wass_{1}(\hat\mu_n,\mu) \big] \le \frac{1}{2(\sqrt{2}-1)}\cdot\frac{1}{\sqrt{n}}
\end{equation}

\begin{rema}
For $\frac{d}{2}-q$ close to $0$, the constant in \eqref{eq:small-d} goes to infinity; in this regime, for moderate $n$ letting $J\to\infty$ is sub-optimal and one should optimize $J$ in \eqref{eq:esp-sum} as we shall do in the next cases.
\end{rema}

\subsubsection{Critical dimension}
If $d=2q$ (or in fact $d\le 2q$) we can rewrite \eqref{eq:esp-sum} as
\[\esp\big[ \wass_{q,\infty}(\hat\mu_n,\mu) \big]
  \le 2^{1-(J+1)q} + \frac{2^{-q} J}{\sqrt{n}}.\]
To optimize $J$, we formally differentiate the right-hand side with respect to $J$, equate to zero and solve for $J$. Reminding that $J$ is an integer, and keeping only the leading term (when $n\to\infty$) to simplify, this leads us to choose
\[ J = \Big\lfloor \frac{\log_2 n}{2q} \Big\rfloor \]
in particular implying $2^{1-(J+1)q} \le 2/\sqrt{n}$.
We deduce the claimed bound
\begin{equation}
\esp\big[ \wass_{q,\infty}(\hat\mu_n,\mu) \big]
  \le \Big(2 + \frac{\log_2(n)}{2^{q+1}q} \Big) \frac{1}{\sqrt{n}} \lesssim \frac{\log n}{\sqrt{n}}
\label{eq:critical-d}
\end{equation}
immediately implying  the bound of Theorem \ref{theomain:W1} for $d=2$ and $q=1$ (where a $\sqrt{2}$ comes from the comparison between the supremum and Euclidean norms):
\begin{equation}
\esp\big[ \wass_{1}(\hat\mu_n,\mu) \big]
  \le \frac{\log_2(n)+8}{\sqrt{8n}}
\end{equation}


\subsubsection{Large dimension}
If $d>2q$, equation \eqref{eq:esp-sum} becomes
\[ \esp\big[ \wass_{q,\infty}(\hat\mu_n,\mu) \big]
  \le 2^{1-(J+1)q} + \frac{2^{J(\frac{d}{2}-q)}-1}{1-2^{q-\frac{d}{2}}} \cdot \frac{1}{2^q\sqrt{n}} \le 2^{1-(J+1)q} + \frac{2^{J(\frac{d}{2}-q)}}{2^q(1-2^{q-\frac{d}{2}})} \cdot \frac{1}{\sqrt{n}} \]
Following the same optimization process as in the critical dimension case, we choose $J$ such that
\[ \frac12 n^{\frac1d} \Big(\frac{2q(1-2^{q-\frac{d}{2}})}{\frac{d}{2}-q} \Big)^{\frac2d} \le 2^J \le   n^{\frac1d} \Big(\frac{2q(1-2^{q-\frac{d}{2}})}{\frac{d}{2}-q} \Big)^{\frac2d}\]
leading to 
\begin{equation*}
\esp\big[ \wass_{q,\infty}(\hat\mu_n,\mu) \big]
  \le 2 \Big(\frac{\frac{d}{2}-q}{2q(1-2^{q-\frac{d}{2}})}\Big)^{\frac{2q}{d}} \Big( 1  + \frac{q}{2^q(\frac{d}{2}-q)} \Big) \frac{1}{n^{\frac{q}{d}}}
\end{equation*}

For $q=1$ and $d\ge 3$, it comes $\esp\big[ \wass_{1,\infty}(\hat\mu_n,\mu) \big] \le C'_d n^{-\frac1d}$ where
\begin{align*}
C'_d = 2\Big(\frac{\frac{d}{2}-1}{2-2^{2-\frac{d}{2}}}\Big)^{\frac{2}{d}} \Big( 1  + \frac{1}{d-2} \Big) \frac{1}{n^{\frac{1}{d}}}
\end{align*}
We have notably $C'_4=3$. Relaxing our bound for $d\ge 4$ to
\[C'_d\le 2\Big(\frac{d}{4}\Big)^{\frac{2}{d}} \Big( 1  + \frac{1}{d-2} \Big)\]
it is more easily seen that it is decreasing (and still takes the value $3$ at $d=4$). We also see that we can take $C'_d\to 2$ as $d\to\infty$. The last part of Theorem \ref{theomain:W1} follows with $C_d = \sqrt{d} C'_d$, and a numerical computation shows $C_3\le 6.3$.

\subsection{The four-corners Cantor measure}\label{sec:four-corners}

We conclude this section with an example showing that the critical case order $\log n/\sqrt{n}$ is sharp if one generalizes its scope.

The \emph{four-corner} Cantor set $K$ is the compact subset of the plane defined as the attractor of the Iterated Function System $(T_1,T_2,T_3,T_4)$ where $T_i$ are homotheties of ratio $1/4$ centered at $(0,0)$, $(0,1)$, $(1,1)$ and $(1,0)$ (see figure \ref{fig:4corners}). It has a natural measure $\mu_K$, which can be defined as the fixed point of the map
\begin{align*}
\mathcal{T} \colon \proba([0,1]^2) &\to \proba([0,1]^2) \\
\nu &\mapsto \frac14 (T_1)_*\nu + \frac14 (T_2)_*\nu + \frac14 (T_3)_*\nu + \frac14 (T_4)_*\nu
\end{align*}
($\mathcal{T}$ is contracting in the complete metric $\wass_1$, so that it has a unique fixed point). The measure $\mu_K$ can also be described as follows. In the  $4$-adic decomposition of the square, at depth $j>0$ there are $16^j$ squares, among which $4^j$ intersect $K$ in their interior; $\mu_K$ gives each of these squares a mass $1/4^j$.

\begin{figure}
\centering
\includegraphics[scale=.3]{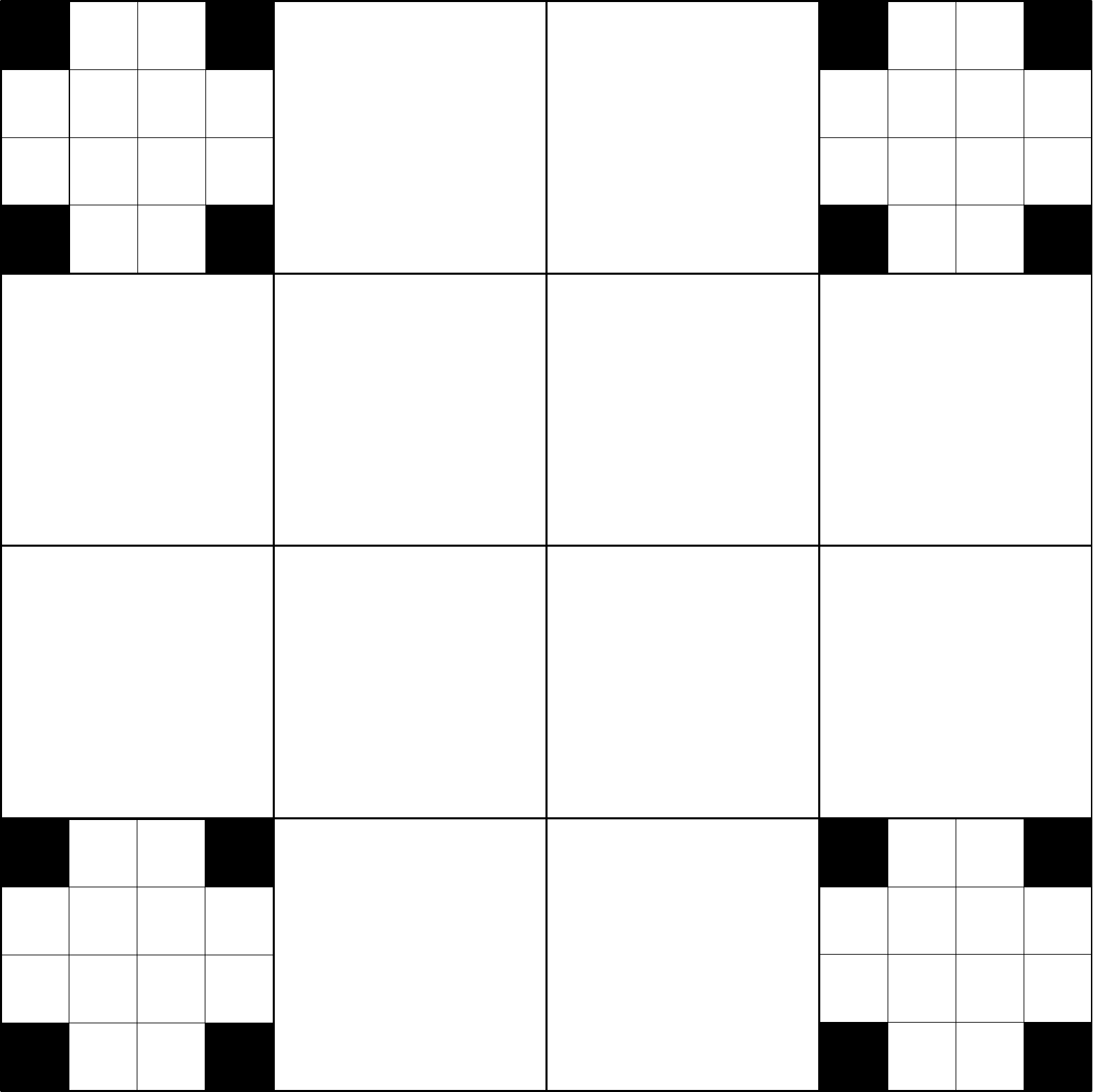}
\caption{Second stage of the construction of the four-corners Cantor set (contained in the filled black area).}
\label{fig:4corners}
\end{figure}

$K$ has Hausdorff dimension $1$ (and positive, finite $1$-dimensional Hausdorff measure), and one should expect $\mu_K$ to have dimension $d=1$ in any reasonable sense of the term. It is thus interesting to have a look at $\wass_{q}(\hat\mu_n,\mu_K)$ in the critical case $q=1/2$.

\begin{prop}
If $(X_k)_{k\ge0}$ are i.i.d. of law $\mu_K$, then
\[\esp\big[ \wass_{\frac12}(\hat\mu_n,\mu_K) \big] \asymp \frac{\log n}{\sqrt{n}}.\]
\end{prop}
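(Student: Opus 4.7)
The plan is to apply the decomposition method of Section~\ref{sec:W1} directly, with the sole new input being a count of cubes. At $2$-adic depth $j=2k$, the self-similarity of $\mu_K$ gives exactly $4^k=2^j$ cubes $C_\lambda$ with $\mu_K(C_\lambda)>0$, each of mass $4^{-k}=2^{-j}$; for odd $j$ the count is comparable by sandwiching between adjacent $4$-adic levels. Consequently
\[
\sum_{\lambda\in\Lambda_j}\sqrt{p_\lambda}\ \asymp\ 2^{j/2},
\]
in contrast with the value $2^j$ obtained for the $2$-dimensional Lebesgue measure. Plugging into \eqref{eq:esp-sum} with $q=1/2$ puts us in the critical regime $d_{\mathrm{eff}}=2q$, and optimizing $J=\lfloor\log_2 n\rfloor$ as in Section~2.3.2 produces $\esp[\wass_{1/2,\infty}(\hat\mu_n,\mu_K)]\lesssim \log n/\sqrt n$, which transfers to $\wass_{1/2}$ up to the factor $\sqrt{2}$.

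\textbf{Lower bound.} The plan is to construct, depending on the samples, a test function $F\in\Hol^{1/2}_{C_1}([0,1]^2)$ whose pairing against $\hat\mu_n-\mu_K$ has expectation $\gtrsim \log n/\sqrt n$. Fix a smooth non-negative bump $B$ supported in a ball of radius $1/4$, and for each $1\le j\le J$ and $\lambda\in\Lambda_j^K:=\{\lambda:\mu_K(C_\lambda)>0\}$ set $b_{j,\lambda}(x):=2^{-j/2}B(2^j(x-x_\lambda))$. An elementary computation gives $\|b_{j,\lambda}\|_{\Hol^{1/2}}\le C_0$ for a constant depending only on $B$, and bumps at a given level have pairwise disjoint supports. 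Set $\varepsilon_{j,\lambda}:=\mathrm{sign}(\hat\mu_n(b_{j,\lambda})-\mu_K(b_{j,\lambda}))$ and
\[
F:=\sum_{j=1}^{J}\sum_{\lambda\in\Lambda_j^K}\varepsilon_{j,\lambda}\,b_{j,\lambda}.
\]
The per-scale contribution to $|F(x)-F(y)|$ is bounded, by disjointness of supports and the Hölder estimate for each bump, by $\min(C_0\,2^{j/2}|x-y|,\,2\cdot 2^{-j/2})$. Summing in $j$ by splitting at the cross-over index $j^*\asymp\log_2(1/|x-y|)$ yields $|F(x)-F(y)|\lesssim |x-y|^{1/2}$, so $F\in\Hol^{1/2}_{C_1}$ with $C_1$ independent of $J$. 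By $1$-Ahlfors regularity $\mu_K(\mathrm{supp}\,b_{j,\lambda})\asymp 2^{-j}$, so $b_{j,\lambda}(X_1)$ has variance $\asymp 2^{-2j}$; provided $n2^{-j}\gtrsim 1$ (i.e.\ $j\le\lfloor\log_2 n\rfloor$) the mean absolute deviation of $\hat\mu_n(b_{j,\lambda})-\mu_K(b_{j,\lambda})$ is $\asymp 2^{-j}/\sqrt n$. Summing over the $\asymp 2^j$ active bumps at level $j$, then over the $J\asymp \log n$ levels, gives
\[
\esp\bigl[\hat\mu_n(F)-\mu_K(F)\bigr]\ =\ \sum_{j,\lambda}\esp\bigl|\hat\mu_n(b_{j,\lambda})-\mu_K(b_{j,\lambda})\bigr|\ \asymp\ \frac{J}{\sqrt n}\ \asymp\ \frac{\log n}{\sqrt n},
\]
and dividing by $C_1$ yields the lower bound via $C_1^{-1}F\in\Hol^{1/2}_1$.

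\textbf{Main obstacles.} The crucial point is that the telescoping bound $\sum_{j\ge 1}\min(2^{j/2}r,2^{-j/2})\lesssim\sqrt r$ works precisely because $1/2$ is the borderline exponent at which bumps of amplitude $2^{-j/2}$ and width $2^{-j}$ assemble into a Hölder function with constant independent of the number of scales; this is the same critical behaviour responsible for the $\log n$ factor in the upper bound. The second technical point is the anticoncentration $\esp|\hat\mu_n(b_{j,\lambda})-\mu_K(b_{j,\lambda})|\gtrsim 2^{-j}/\sqrt n$, which for a sum of $n$ i.i.d.\ bounded random variables is standard (Paley--Zygmund or a local CLT for the binomial-like variable $\#\{k:X_k\in C_\lambda\}$) as soon as $np_\lambda=n2^{-j}\gtrsim 1$, which is precisely the cutoff justifying $J=\lfloor\log_2 n\rfloor$.
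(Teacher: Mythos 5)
Your upper bound follows the paper's reasoning: restrict the decomposition to cubes meeting the Cantor set, count $\asymp 2^j$ active cubes of mass $\asymp 2^{-j}$ at dyadic level $j$ (the paper uses a $4$-adic decomposition instead of a $2$-adic one, which is only a cosmetic difference), and feed the improved count into the critical-regime optimisation exactly as in Section~2.3.2. Your lower bound, however, takes a genuinely different route. The paper argues on the transport side, via a multiscale heuristic (the mass discrepancies at each $4$-adic level each cost $\asymp 1/\sqrt{n}$, and these contributions are claimed to add over $\log_4 n$ levels). You argue on the dual side, building a random test function $F$ as a telescoping sum of signed bumps and checking that the critical exponent $q=1/2$ is exactly the one that makes $F$ uniformly $\Hol^{1/2}$ regardless of the number of scales; the lower bound is then a sum of mean absolute deviations of the bump integrals, with no need to justify additivity of transport costs across scales. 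The dual construction is more explicit and arguably easier to make fully rigorous, and it clarifies why the $\log n$ factor is forced.

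There is, however, a concrete gap in the lower bound as written. You center each bump at $x_\lambda$, which in the paper's notation (Section~2) is the \emph{center} of the cube $C_\lambda$, and then invoke $1$-Ahlfors regularity to conclude $\mu_K(\operatorname{supp} b_{j,\lambda})\asymp 2^{-j}$. But Ahlfors regularity of $\mu_K$ gives $\mu_K(B(x,r))\asymp r$ only for $x\in K$, and the cube centers are generically \emph{not} in $K$. At even level $j=2k$ the active $2$-adic cube is a $4$-adic square whose middle is a hole of the Cantor construction: the distance from the center of $C_\lambda$ to $K$ equals $\sqrt{2}\cdot 2^{-j-2}$, while the radius of $\operatorname{supp} b_{j,\lambda}$ is $2^{-j-2}$, so $\operatorname{supp} b_{j,\lambda}\cap K=\varnothing$ and $\mu_K(\operatorname{supp} b_{j,\lambda})=0$. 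Your anticoncentration step is thus vacuous at those levels: $b_{j,\lambda}(X_1)\equiv 0$ almost surely. The fix is simple and local. Either center each bump at a point of $K\cap C_\lambda$ (for instance the corner of the depth-$j$ Cantor piece nearest the cube's own corner); the gaps in the construction are of size $\gtrsim 2^{-j}$ so the bumps at a fixed level remain disjoint, Ahlfors regularity then applies, and the rest of your argument goes through unchanged. Or simply discard even levels and keep only odd $j$, where the cube center does lie on $K$; you lose a factor $2$ in the count of levels but still get $\asymp\log n$ of them. Once this is corrected the lower bound is sound.
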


\begin{proof}
The proof of the upper bound follows the proof of Theorem \ref{theo:Wq}, using a $4$-adic decomposition and discarding all $\lambda$ such that $C_\lambda$ does not intersects $K$ in its interior. This replaces $d$ by $1$ as there are $4^j$ relevant squares of size $4^{-j}$ (indeed the only place where $d$ is used is in \eqref{eq:concavity}, only through the number of dyadic squares to be considered), so that with $q=1/2$ we end up in the critical case.

To prove the lower bound, we first record the proportions $p_1,p_2,p_3,p_4$ of the random points $X_k$ lying in each of the four relevant depth-one squares (of side-length $1/4$). For large $n$, each $p_i$ is close to $1/4$ with typical fluctuations of the order of $1/\sqrt{n}$. The discrepancy of mass in each of these squares compared to the mass $1/4$ given to each of them by $\mu_K$ induces a cost of at least $1/\sqrt{2n}$, since the distance between depth-one squares is at least $1/2$ and $q=1/2$.
The same reasoning applies at depth two inside each depth-one square, but with $np_i \simeq n/4$ points, thus fluctuations are of the order of $1/\sqrt{n/4}=2/\sqrt{n}$, inducing a total cost of the order of $1/\sqrt{2n}$ (distances are now $1/4\times 1/2$, and a square root is taken since $q=1/2$). The fact that the number of points is $n p_i$ rather than precisely $n/4$ is not an issue, an uneven distribution improving the bound.

At each depth $j$ up to $\log_4 n$, there is a typical induced cost of the order of $1/\sqrt{n}$ from the uneven distribution of points among the $4$ subsquares of each depth $j$ square, yielding the desired bound of the order of $\log n/\sqrt{n}$.
\end{proof}

\section{Wavelet decomposition and convergence against regular test functions}
\label{sec:main}

\subsection{Wavelet decomposition}\label{sec:wavelets}

Let us give a short account of the results about wavelets we will use (see e.g. Meyer's book \cite{meyer1992wavelets} for proofs and references).

It will be convenient to use wavelets of compact support with arbitrary regularity $\Ck{r}$, whose construction is due to Daubechies \cite{daubechies1988orthonormal}. The construction yields \emph{compactly supported} functions $\phi,\psi^\epsilon:\mathbb{R}^d\to\mathbb{R}$ where $\epsilon$ takes any of $2^d-1$ values ($\epsilon\in E:=\{0,1\}^d\setminus\{(0,0,\dots,0)\}$), with particular properties of which only those we will use will be described. 

One defines from these ``father and mother'' wavelets a larger family of \emph{wavelets} by
\begin{align}
\phi_\tau(x) &=\phi(x-\tau),  && (\tau \in\mathbb{Z}^d) \nonumber\\
\psi_\lambda(x) &= 2^{\frac{dj}{2}}\psi^\epsilon(2^j x - \tau), && (\lambda=(j,\tau,\epsilon) \in \Lambda = \mathbb{Z}\times \mathbb{Z}^d\times E);
\label{eq:psi}
\end{align}
one important property of the construction is that the union of $(\phi_\tau)_{\tau\in\mathbb{Z}^d}$ and $(\psi_\lambda)_{\lambda\in\Lambda}$ form an \emph{orthonormal basis} of $L^2(\mathbb{R}^d)$. For 
$f\in L^2(\mathbb{R}^d)$ we can thus write
\[f = \sum_{\tau \in \mathbb{Z}^d} \langle f,\phi_\tau\rangle \phi_\tau + \sum_{j=0}^\infty \sum_{\lambda\in\Lambda_j} \langle f,\psi_\lambda\rangle \psi_\lambda \]
where $\Lambda_j = \{j\}\times \mathbb{Z}^d\times E$ and $\langle\cdot,\cdot\rangle$ denotes the $L^2$ scalar product (with respect to Lebesgue measure).

One stunning property is that many functional spaces can be \emph{characterized} in term of the wavelet coefficients
$\alpha(\lambda)=\langle f,\psi_\lambda\rangle$ and $\beta(\tau)=\langle f,\phi_\tau\rangle$. We shall only use upper bounds on the $\alpha(\lambda)$ and $\beta(\tau)$ in a specific case.

The H\"older space $\Ck{s}$ is defined as the space of $k$ times continuously differentiable with $\gamma$-H\"older partial derivatives of order $k$, with $k$ a non-negative integer, $\gamma\in(0,1]$ and $k+\gamma=s$ (e.g. $\Ck{1}$ is the space of Lipschitz functions, $\Ck{3/2}$ the space of once continuously differentiable functions with $1/2$-H\"older first-order partial derivatives, $\Ck{5}$ is the space of four-times continuously differentiable functions with  Lipschitz fourth-order partial derivatives, etc.). Note that ``$1$-H\"older'',  meaning ``Lipschitz'', could be slightly enlarged to ``Zygmund'' (and should, if one is interested in two-sided bounds), but we need not enter this subtlety here. 

The space $\Ck{s}$ is endowed with the norm
\[\lVert f\rVert_{\Ck{s}} = \max_{j\in\{0,\dots,k\}} \max_{\omega\in \{1,\dots,d\}^{j} } \Big\lVert \frac{\partial^j f}{\partial x_{\omega_1}\cdots\partial x_{\omega_j}}\Big\lVert_\star \]
where the decomposition $s=k+\gamma$ is defined as above and $\lVert\cdot\rVert_\star$ is the uniform norm if $j<k$ and is the $\gamma$-H\"older constant if $j=k$. We denote by $\Cku{s}$ the set of functions with  $\Ck{s}$ norm at most $1$.

If the regularity of the wavelets is larger than the regularity of the considered H\"older space ($r>s$) then
\begin{alignat*}{2}
\lvert \beta(\tau) \rvert &\le C_{d,s}\lVert f\rVert_\infty &&\quad\forall \tau\in\mathbb{Z}^d \\
\lvert \alpha(\lambda)\rvert &\le C_{d,s} \lVert f \rVert_{\Ck{s}} 2^{-\frac{dj}{2}} 2^{-js} &&\quad \forall \lambda\in\Lambda_j,
\end{alignat*}
where the constant $C_{d,s}$ depends implicitely on the choice of father and mother wavelets $\phi$ and $\psi^\epsilon$; but we can fix for each $s$ such a choice with suitable regularity, e.g. $r=s+1$ and the constants then truly depends only on $d$ and $s$. The $\Ck{s}$ norm in the $\alpha(\lambda)$ coefficient could be relaxed to the ``regularity part'' of the norm but we do not use this.

Note that the explicit computation of these constants would in particular need a very fine analysis of the chosen wavelet construction, and I do not know whether such a task has been conducted.

\subsection{Decomposition of regular functions}

Let us now use wavelet decomposition to prove good convergence properties for the empirical measure against smooth enough test functions; the strategy is similar to the one used in Section \ref{sec:W1}.  We assume here that $(X_k)_{k\ge0}$ is a sequence of i.i.d. random variables whose law $\mu$ is supported on a bounded set $\Omega\subset \mathbb{R}^d$ (e.g. $\Omega=[0,1]^d$); note that $\Cku{s}=\Cku{s}(\mathbb{R}^d)$ makes no reference to $\Omega$. We consider a fixed family of wavelet of regularity $r>s$ as in Section \ref{sec:wavelets}; all constants $C$ below implicitly depend on $d$, $s$ and $\Omega$ (only through its diameter). 

Since the wavelets have compact support, there exist some constant $C$ such that for each $j$:
\begin{itemize}
\item for each point $x\in[0,1]^d$, there are at most $C$ different $\lambda$ corresponding to a $\psi_\lambda$ that does not vanish at $x$; the set of those $\lambda$ is denoted by $\Lambda_j(x)\subset\Lambda_j$,
\item the union $\Lambda_j(\Omega) := \bigcup_{x\in\Omega} \lambda_j(x)$
has at most $C 2^{dj}$ elements.
\end{itemize}
We denote by $Z$ the set of parameters $\tau\in\mathbb{Z}^d$ corresponding to a $\phi_\tau$ whose support intersects $\Omega$ (observe that $Z$ is finite).

We fix a function $f\in\Cku{s}$ and decompose it in our wavelet basis:
\[f = \sum_{\tau \in \mathbb{Z}^d} \beta(\tau) \phi_\tau + \sum_{j=0}^\infty \sum_{\lambda\in\Lambda_j} \alpha(\lambda) \psi_\lambda \]
with
\begin{alignat*}{2}
\lvert \beta(\tau) \rvert &\lesssim 1 &&\quad \forall \tau\in\mathbb{Z}^d\\
\lvert \alpha(\lambda)\rvert &\lesssim 2^{-\frac{dj}{2}} 2^{-js} &&\quad \forall \lambda\in\Lambda_j.
\end{alignat*}

Cutting the second term of the decomposition to some depth $J$ we get:
\[f = \sum_{\tau \in Z} \beta(\tau) \phi_\tau + \sum_{j=0}^J \sum_{\lambda\in\Lambda_j} \alpha(\lambda) \psi_\lambda + g\]
where 
\[ g = \sum_{\tau \notin Z} \beta(\tau) \phi_\tau + \sum_{j>J} \sum_{\lambda\in\Lambda_j} \alpha(\lambda) \psi_\lambda.\]
Using the bound on the $\alpha$ coefficients and the formula \eqref{eq:psi} for $\psi_\lambda$, we get:
\[\lVert g\one_{\Omega}\rVert_\infty \lesssim 2^{-s J}\]
and it follows:
\[
\lvert \hat\mu_n(f) - \mu(f) \rvert \lesssim 2^{-Js} + \sum_{\tau\in Z} \lvert \hat\mu_n(\phi_\tau)-\mu(\phi_\tau)\rvert + \sum_{j=0}^J \sum_{\lambda\in\Lambda_j(\Omega)} 2^{-(\frac{d}{2}+s)j} \lvert \hat\mu_n(\psi_\lambda) - \mu(\psi_\lambda) \rvert
\]
where the right-hand side does not depend on $f$. Taking a supremum and an expectation, it then comes:
\begin{equation}
\esp\big[ \lVert \hat\mu_n - \mu\rVert_{\Cku{s}} \big] \lesssim 
2^{-sJ} + \sum_{\tau\in Z} \esp\big[ \lvert \hat\mu_n(\phi_\tau)-\mu(\phi_\tau)\rvert\big] + \sum_{j=0}^J \sum_{\lambda\in\Lambda_j(\Omega)} 2^{-(\frac{d}{2}+s)j} \esp\big[ \lvert \hat\mu_n(\psi_\lambda) - \mu(\psi_\lambda) \rvert\big]
\label{eq:wavelet1}
\end{equation}
and to conclude, we simply need to estimate the last two terms above.

\subsection{Convergence for basis elements}

\begin{lemm}\label{lemm:varsum}
We have 
\begin{align*}
\sum_{\tau\in Z} \esp\big[ \lvert \hat\mu_n(\phi_\tau)-\mu(\phi_\tau)\rvert\big] &\lesssim \frac{1}{\sqrt{n}} \\
\text{and}\qquad \sum_{\lambda\in\Lambda_j(\Omega)} \esp\big[ \lvert \hat\mu_n(\psi_\lambda) - \mu(\psi_\lambda) \rvert\big] &\lesssim \frac{2^{dj}}{\sqrt{n}}
\end{align*}
\end{lemm}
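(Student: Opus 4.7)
The plan is to reduce both sums to the elementary variance inequality $\esp|Y-\esp Y|\le \sqrt{\Var Y}$ and then exploit the $L^2$-normalization of the wavelets through a single Cauchy--Schwarz step.

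For any fixed basis element, the i.i.d. assumption gives $\Var(\hat\mu_n(\psi_\lambda)) = \frac{1}{n}\Var(\psi_\lambda(X_1)) \le \frac{1}{n}\mu(\psi_\lambda^2)$, and Jensen's inequality yields
\[
\esp\big[\lvert \hat\mu_n(\psi_\lambda)-\mu(\psi_\lambda)\rvert\big] \le \frac{\sqrt{\mu(\psi_\lambda^2)}}{\sqrt n},
\]
and similarly for $\phi_\tau$. For the first claimed inequality, $Z$ is finite with cardinality bounded in terms of $\Omega$ and the (compact) support of $\phi$, and $\|\phi_\tau\|_\infty = \|\phi\|_\infty$ is a universal constant; a term-by-term bound $\mu(\phi_\tau^2)\le \|\phi\|_\infty^2$ then trivially sums to $O(1/\sqrt n)$.

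For the second inequality, a naive term-by-term bound would cost an extra $2^{dj/2}$ factor; this is where I would use Cauchy--Schwarz:
\[
\sum_{\lambda\in\Lambda_j(\Omega)} \sqrt{\mu(\psi_\lambda^2)} \le \sqrt{\#\Lambda_j(\Omega)}\,\sqrt{\sum_{\lambda\in\Lambda_j(\Omega)} \mu(\psi_\lambda^2)}.
\]
The first factor is $\lesssim 2^{dj/2}$ by the hypothesis that $\#\Lambda_j(\Omega)\lesssim 2^{dj}$. For the second factor, I would combine the bounded-overlap property (at each $x$, at most $C$ of the $\psi_\lambda$'s are nonzero) with the scaling relation \eqref{eq:psi}, which yields the pointwise bound $\psi_\lambda(x)^2 \le 2^{dj}\|\psi^\epsilon\|_\infty^2$. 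Thus $\sum_{\lambda\in\Lambda_j(\Omega)} \psi_\lambda^2(x) \lesssim 2^{dj}$ at every $x$, and integration against the probability measure $\mu$ (supported on $\Omega$) gives $\sum_{\lambda\in\Lambda_j(\Omega)} \mu(\psi_\lambda^2) \lesssim 2^{dj}$. Combining the two factors produces $\sum_{\lambda\in\Lambda_j(\Omega)} \sqrt{\mu(\psi_\lambda^2)} \lesssim 2^{dj}$, and dividing by $\sqrt n$ gives the claimed bound.

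The only subtle point is the Cauchy--Schwarz step: applying the triangle inequality directly loses the full force of the $L^2$-normalization built into \eqref{eq:psi} and produces a suboptimal exponent in $j$. Apart from this, the argument is purely a variance computation combined with the geometric properties (bounded overlap and sup-norm scaling) of compactly supported wavelets recalled in Section~\ref{sec:wavelets}; no independence beyond the computation of a single variance is used, so in principle the lemma extends verbatim to any process for which one has a comparable variance bound $\Var(\hat\mu_n(\psi_\lambda))\lesssim \mu(\psi_\lambda^2)/n$.
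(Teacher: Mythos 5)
Your proof is correct and follows essentially the same route as the paper: bound each term by $\sqrt{\Var}/\sqrt{n}$ via Jensen, then apply Cauchy--Schwarz over $\Lambda_j(\Omega)$ and exploit bounded overlap plus the $L^2$-normalization of the wavelets to control the sum of variances. The only (cosmetic) difference is that the paper works with the rescaled wavelets $\bar\psi_\lambda = 2^{-dj/2}\psi_\lambda$ and reinstates the factor $2^{dj/2}$ at the end, whereas you keep the normalization factor inside the variance throughout; the computation is otherwise identical, and your closing remark about the role of independence is also the content of the paper's remark following the lemma.
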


\begin{proof}
For each $\tau\in Z$, the random variable $\hat \mu_n(\phi_\tau)$ is the average of $n$ independent identically distributed, bounded random variables of expectation $\mu(\phi_\tau)$, so that
$\esp\big[ \lvert \hat \mu_n(\phi_\tau)-\mu(\phi_\tau)\rvert\big] \le C/\sqrt{n}$. Since $Z$ is finite, the first claim is proved.

To prove the second claim, we cannot argue in the exact same way because $\psi_\lambda$ depends on $j$. To ease notation we introduce $\bar\psi_\lambda :=  2^{-\frac{dj}{2}} \psi_\lambda$ and $Y_\lambda :=  \hat\mu_n(\bar\psi_\lambda)-\mu(\bar\psi_\lambda)$, and recall that $\bar\psi_\lambda$ is bounded independently of $j$. Also, a bounded number of different $\bar\psi_\lambda$ ($\lambda\in\Lambda_j$) are non-zero at any point $x\in\Omega$; we denote by $p_\lambda$ the mass given by $\mu$ to the support of $\psi_\lambda$ and observe that $Y_\lambda$ is the average of $n$ i.i.d. centered random variables of variance less than $Cp_\lambda + \mu(\bar\psi_\lambda)^2$.
We have
\[\Var(Y_\lambda) \le \frac1n\big(C p_\lambda + \mu(\bar\psi_\lambda)^2\big) \qquad \sum_{\lambda\in\Lambda_j(\Omega)} p_\lambda \lesssim 1 \qquad \sum_{\lambda\in\Lambda_j(\Omega)} \mu(\bar\psi_\lambda) \lesssim 1 \]
so that
\begin{align*}
\sum_{\lambda\in\Lambda_j(\Omega)} \Var(Y_k)
  &\le \frac1n \Big( C \sum_{\lambda\in\Lambda_j(\Omega)} p_\lambda + \big(\sum_{\lambda\in\Lambda_j(\Omega)} \mu(\bar\psi_\lambda)\big)^2 \Big)\\
  &\lesssim \frac1n.
\end{align*}
Now it comes
\begin{align*}
\sum_{\lambda\in\Lambda_j(\Omega)} \esp\big[ \lvert \hat\mu_n(\psi_\lambda) - \mu(\psi_\lambda) \rvert\big]
  &= 2^{\frac{dj}{2}} \sum_{\lambda\in\Lambda_j(\Omega)} \esp\big[ \lvert Y_\lambda \rvert\big] \\
  &\le 2^{\frac{dj}{2}}\sum_{\lambda\in\Lambda_j(\Omega)} \sqrt{\esp\big[ Y_\lambda^2 \big]} \\
  &\le 2^{\frac{dj}{2}}\sqrt{\lvert \Lambda_j(\Omega)\rvert} \sqrt{\sum_{\lambda\in\Lambda_j(\Omega)} \Var(Y_\lambda)}\\
  &\lesssim \frac{2^{dj}}{\sqrt{n}}
\end{align*}
\end{proof}

\begin{rema}
Lemma \ref{lemm:varsum} is the only place where we use that the $(X_k)_{k\in\mathbb{N}}$ are i.i.d. The method can therefore be applied to any stochastic process satisfying the conclusion of Lemma \ref{lemm:varsum}.
\end{rema}

\subsection{Conclusion of the proof}

Plugin Lemma \ref{lemm:varsum} into \eqref{eq:wavelet1} yields
\begin{align*}
\esp\big[ \lVert \hat\mu_n - \mu\rVert_{\Cku{s}} \big] 
 &\lesssim 2^{-Js} + \frac{1}{\sqrt{n}} \sum_{j=0}^J \big(2^{\frac{d}{2}-s}\big)^j
\end{align*}
and we get the same trichotomy as before. If $s > d/2$, then we can let $J\to\infty$ to obtain 
\[\esp\big[ \lVert \hat\mu_n - \mu\rVert_{\Cku{s}} \big] \le 
\frac{C}{\sqrt{n}},\]
if $s=d/2$ we can take $J$ such that $2^{-Js}\simeq 1/\sqrt{n}$ and get
\[\esp\big[ \lVert \hat\mu_n - \mu\rVert_{\Cku{s}} \big] \le C
\frac{\log n}{\sqrt{n}},\]
and if $s< d/2$ we can choose $J$ such that $2^J\simeq n^{\frac1d}$ to get
\[\esp\big[ \lVert \hat\mu_n - \mu\rVert_{\Cku{s}} \big] \le 
\frac{C}{n^{s/d}},\]
ending the proof of Theorem \ref{theomain:reg}.

\section{Markov chains}\label{sec:MC}

In this section we assume  $(X_k)_{k\ge 0}$ is a Markov chain on a bounded domain; since we will use Fourier series, it will make things simpler to embed this domain into a torus, so we assume $\Omega\subset \mathbb{T}^d = \mathbb{R}^d/\mathbb{Z}^d$ (we do not lose generality in doing so, as scaling down $\Omega$ makes it possible to make the embedding isometric). We still denote by $\lVert x-y\rVert$ the distance between two points induced by the Euclidean norm.

Our main assumption is that the iterated transition kernel of $(X_k)_{k\ge0}$, defined by
\[ m_x(A) = \pr(X_{k+1}\in A \mid X_k=x)  \qquad m_x^t(A) = \pr(X_{k+t}\in A \mid X_k=x)\]
 is exponentially contracting in $\wass_1$, i.e. there are constants $D\ge 1$ and $\theta\in(0,1)$ such that
\begin{equation}
\wass_1(m_x^t,m_y^t) \le D\theta^t \lVert x-y\rVert.
\label{eq:contraction}
\end{equation}

Let us denote by $\op{L}$  the averaging operator, i.e.
\[\op{L} f (x) = \int f(y) \dd m_x(y)\]
and by $\op{L}^*$ its dual acting on probability measure, i.e. $\op{L}^*\nu$ is the law of $X_{k+1}$ conditioned on $X_k$ having law $\nu$. The linearity of $\wass_1$ enables one to rewrite \eqref{eq:contraction} as
\begin{equation}
\wass_1(\op{L}^{*t}\nu_0,\op{L}^{*t}\nu_1) \le D\theta^t \wass_1(\nu_0,\nu_1) 
\label{eq:contraction2}
\end{equation}
so that there is a unique stationary measure $\mu$, and the law of $X_k$ converges exponentially fast (in $\wass_1$) to $\mu$, whatever the law of $X_0$ is.

We shall prove Theorem \ref{theomain:Markov}, which we restate for convenience.
\begin{theo}\label{theo:Markov-precise}
For some constant $C=C(\Omega,d,D,s)$ and all large enough $n$, letting $\bar n=(1-\theta)n$, we have
\begin{equation}
\esp\big[ \lVert \hat\mu_n - \mu \rVert_{\Cku{s}} \big] \le C \begin{dcases*}
\frac{(\log \bar n)^{\frac{d}{2s+1}}}{\sqrt{\bar n}} & when $s > d/2$\\[2\jot]
\frac{\log \bar n}{\sqrt{\bar n}} & when $s=d/2$ \\[2\jot]
\frac{(\log \bar n)^{d-2s+\frac sd}}{\bar n^{\frac sd}} & when $s < d/2$
\end{dcases*}
\label{eq:theo-Markov}
\end{equation}
\end{theo}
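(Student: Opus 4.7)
The plan is to embed $\Omega$ isometrically into the flat torus $\mathbb{T}^d=\mathbb{R}^d/\mathbb{Z}^d$ (rescaling $\Omega$ if necessary) and to work with the Fourier basis $e_k(x)=e^{2\pi i k\cdot x}$, $k\in\mathbb{Z}^d$. Everything reduces to understanding $E_k:=\hat\mu_n(e_k)-\mu(e_k)$ on each individual Fourier mode and then reassembling the contributions using the regularity of $f$. A key preliminary step, which supplies the ``counter-curse,'' is to interpolate the contraction hypothesis: Jensen's inequality applied to the concave function $t\mapsto t^\alpha$ gives $\wass_\alpha(m_x^t,m_y^t)\le\wass_1(m_x^t,m_y^t)^\alpha\le D^\alpha\theta^{\alpha t}\lVert x-y\rVert^\alpha$ for every $\alpha\in(0,1]$. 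Dually, $\alpha$-H\"older observables contract at rate $\theta^{\alpha t}$ under $\op{L}^t$; applied to Fourier modes this is crucial, because the $\alpha$-H\"older constant of $e_k$ is $\lesssim |k|^\alpha$ instead of the Lipschitz constant $|k|$.

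Next I would bound the variance on each mode. Expanding
\[ |E_k|^2 = \frac{1}{n^2}\sum_{\ell,\ell'}(e_k(X_\ell)-\mu(e_k))\overline{(e_k(X_{\ell'})-\mu(e_k))},\]
conditioning on the filtration $\mathcal{F}_\ell$ and invoking the H\"older contraction on $\op{L}^{\ell'-\ell}e_k-\mu(e_k)$, each off-diagonal term is controlled by $C|k|^\alpha\theta^{\alpha(\ell'-\ell)}$. Summing over pairs and treating the bias $\esp[\hat\mu_n(e_k)]-\mu(e_k)$ similarly yields
\[ \esp[|E_k|^2]\lesssim\frac{1}{n}+\frac{|k|^\alpha}{n(1-\theta^\alpha)}. \]
Since $\alpha$ is free and may be tuned for each $k$, the choice $\alpha=1/\log(2+|k|)$ turns $|k|^\alpha$ into a constant while $1-\theta^\alpha\gtrsim\alpha(1-\theta)$, producing $\esp[|E_k|^2]\lesssim\log(2+|k|)/\bar n$. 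This is where the logarithmic factors of the final bound originate.

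The regularity of $f\in\Cku{s}$ then enters through the Sobolev injection $\Ck{s}\hookrightarrow H^s$ and Parseval's identity, which together give $\sum_k(1+|k|)^{2s}|c_k(f)|^2\lesssim 1$. Fixing a cutoff $N$ and writing $f=S_Nf+R_Nf$ for the Fourier truncation at level $N$, Cauchy-Schwarz with Sobolev weights yields
\[ \sup_{f\in\Cku{s}}\Big|\sum_{|k|\le N}c_k(f)\,E_k\Big|\lesssim\Big(\sum_{|k|\le N}(1+|k|)^{-2s}|E_k|^2\Big)^{1/2}, \]
whose expectation is controlled, via Jensen, by $\bigl(\sum_{|k|\le N}(1+|k|)^{-2s}\log(2+|k|)/\bar n\bigr)^{1/2}$. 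The remainder $\lVert R_Nf\rVert_\infty$ I would bound by $N^{(d-2s)/2}$ when $s>d/2$ (once more by Cauchy-Schwarz in the Sobolev weight) and by the classical $N^{-s}(\log N)^d$ estimate coming from Lebesgue constants of Fourier partial sums when $s\le d/2$. Optimizing $N$ in each of the three regimes $s>d/2$, $s=d/2$, $s<d/2$ then produces the three branches of \eqref{eq:theo-Markov}.

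The hard part will be coordinating these layered optimizations---the per-mode choice of H\"older exponent that feeds the variance bound, the Lebesgue-constant losses in the subcritical regime, and the final balance between truncation and fluctuation---so as to track the logarithmic factors cleanly. Indeed, the paper itself acknowledges that the precise $\log$ exponents reached this way are probably not sharp.
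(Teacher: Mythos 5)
Your proposal follows the paper's proof almost line by line: Fourier basis on the torus, interpolation of the $\wass_1$-contraction to H\"older exponent $\alpha$ via concavity of $t\mapsto t^\alpha$, the resulting second-moment bound $\esp[\lvert \hat\mu_n(e_k)-\mu(e_k)\rvert^2]\lesssim \lvert k\rvert^{2\alpha}/((1-\theta^\alpha)n)$, Cauchy--Schwarz with Sobolev weights through the injection $\Ck{s}\hookrightarrow H^s$, and a final truncation--fluctuation balance producing the three regimes. The only discrepancies are local refinements that leave the structure intact: you tune $\alpha_k=1/\log(2+\lvert k\rvert)$ per mode whereas the paper fixes a single $\alpha=1/\log J$ tied to the cutoff, and in the supercritical case $s>d/2$ you bound the tail $\lVert R_Nf\rVert_\infty\lesssim N^{(d-2s)/2}$ directly by Cauchy--Schwarz rather than by the paper's uniform Jackson-plus-Lebesgue-constant estimate $(\log J)^d/J^s$; both changes are harmless (and the first would in fact shave a $\sqrt{\log\bar n}$ off the supercritical bound), so the proposal is sound and reproduces at least the stated rates.
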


Following the decomposition method, we shall find a suitable decomposition basis for any $f\in\Cku{s}$, seeking for a compromise between precision of a truncated decomposition and number of basis elements. Here using wavelets seems inefficient, as we do not have a precise enough analogue of Lemma \ref{lemm:varsum}, which uses independence to take advantage of the localization property of wavelets; without this, the number and size of the $\psi_\lambda$ are overwhelming. We shall use Fourier series instead, as they will be more easily controlled under our assumptions. For simplicity we consider complex-valued functions here, and denote the Fourier basis by
$e_k(x) := e^{2i\pi k\cdot x}$
where $k\in\mathbb{Z}^d$ and the dot $\cdot$ denotes the canonical inner product.

The key is thus to control $\lvert \hat\mu_n(e_k) - \mu(e_k) \rvert$; our hypothesis may seem perfectly suited to this since $e_k$ is Lipschitz, but its Lipschitz constant grows too rapidly with $k$ for a direct approach to be efficient. We shall combine the following two observations (the first of which is pretty trivial, the second of which is folklore).
\begin{lemm}\label{lemm:Hol-ek}
For all $\alpha\in(0,1)$, we have the following control of $e_k$'s $\alpha$-H\"older constant:
\[\Hol_\alpha(e_k) \lesssim \lvert k\rvert_\infty^\alpha\]
where $\lvert k\rvert_\infty = \max \big\{k_i : i\in\{1,\dots,d\} \big\}$.
\end{lemm}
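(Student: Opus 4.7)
The plan is straightforward: interpolate between the trivial uniform bound $\lVert e_k\rVert_\infty \le 1$ and a direct Lipschitz estimate on $e_k$.

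First I would establish the Lipschitz bound. Since $|e^{i a}-e^{i b}| \le |a-b|$, for any $x,y\in\mathbb{T}^d$ we have
\[\lvert e_k(x)-e_k(y)\rvert \le 2\pi \lvert k\cdot(x-y)\rvert \le 2\pi \lvert k\rvert_\infty \lVert x-y\rVert_1 \le 2\pi\sqrt{d}\,\lvert k\rvert_\infty \lVert x-y\rVert,\]
so $\Lip(e_k) \lesssim \lvert k\rvert_\infty$ with an implicit constant depending only on $d$. At the same time we of course have the ``zeroth order'' bound $\lvert e_k(x)-e_k(y)\rvert \le 2$.

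Next, I interpolate. For any two points, the elementary inequality $\min(A\lVert x-y\rVert, B) \le A^\alpha B^{1-\alpha} \lVert x-y\rVert^\alpha$ (obtained by writing $\min(u,v)\le u^\alpha v^{1-\alpha}$ for $u,v\ge 0$) applied with $A = 2\pi\sqrt d \lvert k\rvert_\infty$ and $B=2$ gives
\[\lvert e_k(x)-e_k(y)\rvert \le \big(2\pi\sqrt d\,\lvert k\rvert_\infty\big)^\alpha\cdot 2^{1-\alpha}\cdot \lVert x-y\rVert^\alpha,\]
which is exactly the claim, with an implicit constant depending only on $d$ and $\alpha$ (and bounded uniformly in $\alpha\in(0,1)$).

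There is essentially no obstacle here: the only subtlety is that the Euclidean Lipschitz constant of $e_k$ depends on $\lvert k\rvert_\infty$ only up to a factor $\sqrt d$ coming from $\lVert\cdot\rVert_1\le\sqrt d\lVert\cdot\rVert_2$, and this factor is harmless since $C(d)$ is absorbed into $\lesssim$. The estimate is saturated (up to constants) by taking $x,y$ aligned with the direction of largest coordinate of $k$ at distance $\sim 1/\lvert k\rvert_\infty$, so no better bound is possible, confirming the correct order of magnitude.
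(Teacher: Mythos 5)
Your proof is correct and follows essentially the same route as the paper: bound $\Lip(e_k)$ by $2\pi\sqrt{d}\,\lvert k\rvert_\infty$, bound the oscillation by $2$, and interpolate via $\min(u,v)\le u^\alpha v^{1-\alpha}$ to obtain the H\"older constant $2\pi^\alpha d^{\alpha/2}\lvert k\rvert_\infty^\alpha$, which is exactly the constant appearing in the paper's displayed estimate. The only cosmetic difference is that the paper phrases the interpolation as taking the minimum of the two bounds on the quotient $\lvert e_k(x)-e_k(y)\rvert/\lVert x-y\rVert^\alpha$, whereas you state the elementary inequality abstractly; the content is identical.
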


\begin{proof}
We have $\Lip(e_k) \le 2\pi \sqrt{d} \lvert k\rvert_\infty$ and $\lVert e_k\rVert_\infty\le 1$ so that for all $x\neq y\in\mathbb{T}^d$:
\[
\frac{\lvert e_k(x)-e_k(y) \rvert}{\lVert x-y\rVert^\alpha}
  \le \min\Big( \frac{2}{\lVert x-y\rVert^\alpha}, 2\pi\sqrt{d}\lvert k\rvert_\infty \lVert x-y\rVert^{1-\alpha}  \Big)
  \le 2\pi^\alpha d^{\frac\alpha2}\lvert k\rvert_\infty^\alpha
\]
\end{proof}

\begin{lemm}\label{lemm:correlations}
For all $\alpha\in(0,1]$, denoting by $\wass_\alpha$ the $\alpha$-Wasserstein metric (i.e. the $1$-Wasserstein metric associated with the modified distance $\lVert \cdot\rVert^\alpha$), we have
\begin{equation}
\wass_\alpha(\op{L}_0^{*t}\nu_0,\op{L}_0^{*t}\nu_1) \le D^\alpha\theta^{\alpha t} \wass_\alpha(\nu_0,\nu_1) 
\label{eq:contraction-alpha}
\end{equation}
As a consequence, for all $\alpha$-H\"older functions $f:\Omega\to \mathbb{C}$ and all $\ell,m\in\mathbb{N}$ it holds
\begin{align*}
\big\lvert \esp[f(X_\ell)]-\mu(f) \big\rvert &\lesssim \Hol_\alpha(f) \, \theta^{\alpha \ell} \\
\big\lvert \esp[f(X_m)f(X_\ell)] - \esp[f(X_m)] \esp[f(X_\ell)] \big\lvert   
  &\lesssim \Hol_\alpha(f)^2 \, \theta^{\alpha \lvert m-\ell \rvert}
\end{align*}
where the implied constants depends only on $\Omega$ and the constant $C$ in \eqref{eq:contraction}.
\end{lemm}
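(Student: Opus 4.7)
The plan is to prove (i), then use it as the main tool for (ii), and use (ii) as the main tool for (iii). For (i), I would lift the $\wass_1$-contraction to $\wass_\alpha$ via a coupling argument combined with Jensen's inequality. Given a nearly optimal coupling $\pi_0$ of $(\nu_0,\nu_1)$ for $\wass_\alpha$ and, for each $(x,y)$, a nearly optimal $\wass_1$-coupling $\pi_{x,y}$ of $(m_x^t,m_y^t)$ (chosen measurably in $(x,y)$, which is standard), I would paste them into a coupling of $(\op{L}^{*t}\nu_0,\op{L}^{*t}\nu_1)$. Since $u\mapsto u^\alpha$ is concave for $\alpha\in(0,1]$, Jensen on the inner integral gives
\[
\int \lVert x'-y'\rVert^\alpha \dd\pi_{x,y}(x',y') \le \Bigl(\int \lVert x'-y'\rVert \dd\pi_{x,y}(x',y')\Bigr)^\alpha \le D^\alpha\theta^{\alpha t}\lVert x-y\rVert^\alpha,
\]
and integrating this against $\pi_0$ then taking an infimum yields \eqref{eq:contraction-alpha}.

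For (ii), I would apply Kantorovich duality in the form $|\nu(f)-\nu'(f)|\le \Hol_\alpha(f)\,\wass_\alpha(\nu,\nu')$ valid for any $\alpha$-Hölder $f$. Writing $\nu_\ell=\op{L}^{*\ell}\nu_0$ for the law of $X_\ell$ and using stationarity $\mu=\op{L}^{*\ell}\mu$, step (i) yields $\wass_\alpha(\nu_\ell,\mu)\le D^\alpha\theta^{\alpha\ell}\wass_\alpha(\nu_0,\mu)$, and boundedness of $\Omega$ makes $\wass_\alpha(\nu_0,\mu)\le \diam(\Omega)^\alpha$ a universal constant, giving the stated bound. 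Applying the same reasoning to $\nu_0=\delta_x$ gives, as a by-product used in step (iii), that $|\op{L}^\ell f(x)-\mu(f)|\lesssim \Hol_\alpha(f)\theta^{\alpha\ell}$ uniformly in $x$.

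For (iii), assume without loss of generality $m\le\ell$. The Markov property rewrites the correlation under the law of $X_m$ alone: with $g:=\op{L}^{\ell-m}f$,
\[
\esp[f(X_m)f(X_\ell)] - \esp[f(X_m)]\esp[f(X_\ell)] = \esp[f(X_m)g(X_m)] - \esp[f(X_m)]\esp[g(X_m)],
\]
which is invariant under replacing $f$ by $\tilde f:=f-\mu(f)$ and $g$ by $\tilde g:=g-\mu(f)$. The by-product of (ii) gives $\lVert\tilde g\rVert_\infty\lesssim \Hol_\alpha(f)\theta^{\alpha(\ell-m)}$, while the fact that $\mu(\tilde f)=0$ together with $\tilde f$ being $\alpha$-Hölder on the bounded domain $\Omega$ yields $\lVert\tilde f\rVert_\infty\le \Hol_\alpha(f)\diam(\Omega)^\alpha\lesssim \Hol_\alpha(f)$ via the oscillation argument. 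Bounding the covariance by $2\lVert\tilde f\rVert_\infty\lVert\tilde g\rVert_\infty$ then completes the proof. The only mild subtlety is the measurable selection of the couplings $\pi_{x,y}$ in step (i), which is a standard application of measurable selection theorems; everything else is elementary once the $\wass_\alpha$-contraction is in hand.
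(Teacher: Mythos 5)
Your proposal is correct and follows essentially the same route as the paper: first upgrade the $\wass_1$-contraction to a $\wass_\alpha$-contraction using concavity of $u\mapsto u^\alpha$, then deduce both $\lVert\op{L}^t f-\mu(f)\rVert_\infty\lesssim\Hol_\alpha(f)\theta^{\alpha t}$ and the correlation bound by recentering and taking uniform norms. The only cosmetic differences are that in step (i) you spell out the gluing-of-couplings argument with Jensen on the primal side, where the paper shortcuts by reducing to point masses ``by linearity'' and then invoking $\wass_\alpha\le\wass_1^\alpha$, and in step (iii) you recenter both $f$ and $g$ using the translation invariance of the covariance, where the paper simply assumes $\mu(f)=0$ from the start; these are the same arguments in slightly different dress.
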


\begin{proof}
By linearity we only have to check \eqref{eq:contraction-alpha} when $\nu_0=\delta_x$ and $\nu_1=\delta_y$ for some $x,y\in\Omega$, and by concavity
\[ \wass_\alpha(\op{L}^{*t}\delta_x,\op{L}^{*t}\delta_y)\le \big( \wass_1(\op{L}^{*t}\delta_x,\op{L}^{*t}\delta_y) \big)^\alpha \le D^\alpha\theta^{\alpha t} \lVert x-y\rVert^\alpha = D^\alpha \theta^{\alpha t}\wass_\alpha(\delta_x,\delta_y).\]

To prove convergence toward the average and decay of correlation, we first use the contraction and that $\mu$ is the stationary measure to get
\begin{align*}
\big\lvert \op{L}^t f(x) - \mu(f)\big\rvert &= \Big\lvert \int \op{L}^tf \dd\delta_x -\int f \dd\mu \Big\rvert \\
  &= \Big\lvert \int f \dd\big( \op{L}^{*t}\delta_x\big) -\int f \dd \big(\op{L}^{*t} \mu \big) \Big\rvert \\
  &\le \Hol_\alpha(f) \wass_\alpha(\op{L}^{*t} \delta_x,\op{L}^{*t}\mu) \\
  &\le  \Hol_\alpha(f) \, D^\alpha\theta^{\alpha t} \wass_\alpha(\delta_x,\mu) \\
\big\lvert \op{L}^t f(x) - \mu(f)\big\rvert &\lesssim  \Hol_\alpha(f) \, \theta^{\alpha t}.
\end{align*} 

Assuming without lost of generality $\mu(f)=0$ we have $\lVert f\rVert _\infty\lesssim \Hol_\alpha(f)$ ($\mu(f)=0$ implies that $f$ takes both non-positive and non-negative values, and $\Omega$ is bounded). 
Assume further $m\ge \ell$ and write $m=\ell+t$.
Combining all previous observations we get:
\begin{align*}
\lVert \op{L}^t f\rVert_\infty &\lesssim \Hol_\alpha(f) \, \theta^{\alpha t},\\
\big\lvert \esp[f(X_m)] \big\lvert &= \big\lvert \esp\big[\op{L}^m f(X_0) \big] \big\rvert \\
  &\lesssim \Hol_\alpha(f) \, \theta^{\alpha m},\\
\big\lvert \esp[f(X_\ell)] \big\lvert &\lesssim \Hol_\alpha(f) \, \theta^{\alpha \ell},\\
\big\lvert\esp[f(X_m)f(X_\ell)] \big\rvert &= \big\lvert \esp\big[\op{L}^t f(X_\ell) \, f(X_\ell) \big] \big\rvert \\
  &\lesssim \lVert \op{L}^t f\rVert_\infty \esp[\lvert f(X_\ell)\rvert] \\
  &\lesssim \Hol_\alpha(f)^2 \theta^{\alpha t}
\end{align*}
and the conclusion follows.
\end{proof}

We deduce the following from these two Lemmas.
\begin{coro}
For all $k,\alpha$ and all $n\ge 1/(1-\theta^\alpha)$ it holds
\[\esp\big[\lvert \hat\mu_n(e_k)-\mu(e_k)\rvert^2\big] \lesssim \frac{\lvert k\rvert_\infty^{2\alpha}}{(1-\theta^\alpha)n} \]
\end{coro}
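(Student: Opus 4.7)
The plan is to expand the square as a double sum of correlations, control each correlation via the Markov property together with Lemma~\ref{lemm:correlations}, and then sum a geometric series. Set $g := e_k - \mu(e_k)$ so that $\mu(g)=0$ and $\hat\mu_n(e_k)-\mu(e_k) = \frac{1}{n}\sum_{\ell=1}^n g(X_\ell)$. The starting point is
\[ \esp\big[\lvert \hat\mu_n(e_k)-\mu(e_k)\rvert^2\big] = \frac{1}{n^2}\sum_{\ell,m=1}^n \esp\big[g(X_\ell)\,\overline{g(X_m)}\big]. \]
For $m\ge\ell$, conditioning on $X_\ell$ and using $\op{L}^{m-\ell}\bar g = \overline{\op{L}^{m-\ell} g}$ rewrites the generic term as $\esp[g(X_\ell)\,\overline{\op{L}^{m-\ell}g(X_\ell)}]$.

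Next, I invoke (a complex-valued variant of) Lemma~\ref{lemm:correlations}. Since $\mu(g)=0$ and $\Omega$ is bounded, splitting $g$ into real and imaginary parts (each with Hölder constant bounded by $\Hol_\alpha(g)$, each with zero $\mu$-mean hence taking values of both signs) yields both $\|g\|_\infty \lesssim \Hol_\alpha(g)$ and $\|\op{L}^{t}g\|_\infty \lesssim \Hol_\alpha(g)\,\theta^{\alpha t}$. Combined with Lemma~\ref{lemm:Hol-ek}, namely $\Hol_\alpha(g)=\Hol_\alpha(e_k)\lesssim \lvert k\rvert_\infty^\alpha$, this yields
\[ \big\lvert \esp[g(X_\ell)\,\overline{g(X_m)}]\big\rvert \;\le\; \|g\|_\infty\cdot \|\op{L}^{m-\ell}g\|_\infty \;\lesssim\; \lvert k\rvert_\infty^{2\alpha}\,\theta^{\alpha\lvert m-\ell\rvert}. \]

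Summing over $\ell,m\in\{1,\dots,n\}$ and using the standard geometric estimate
\[ \sum_{\ell,m=1}^n \theta^{\alpha\lvert m-\ell\rvert} \;\le\; n\Big(1 + 2\sum_{t=1}^\infty \theta^{\alpha t}\Big) \;\le\; \frac{2n}{1-\theta^\alpha}, \]
and dividing by $n^2$, I obtain the claimed bound $\esp[\lvert \hat\mu_n(e_k)-\mu(e_k)\rvert^2] \lesssim \lvert k\rvert_\infty^{2\alpha}/((1-\theta^\alpha)n)$. The hypothesis $n\ge 1/(1-\theta^\alpha)$ is not actually used in this direct derivation; it merely allows one to absorb the smaller contribution $\lvert\esp[g(X_\ell)]\rvert\lesssim \lvert k\rvert_\infty^\alpha\,\theta^{\alpha\ell}$ coming from the non-stationarity of $X_\ell$ into the leading constant, should one wish to write the result in variance rather than second-moment form. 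The only step that is not entirely routine is the passage from the real-valued statement of Lemma~\ref{lemm:correlations} to complex-valued $g$, but this is handled by the real/imaginary decomposition mentioned above.
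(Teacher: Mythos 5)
Your proof is correct and follows essentially the same route as the paper: expand $\lvert\hat\mu_n(e_k)-\mu(e_k)\rvert^2$ into a double sum of correlations, control each term by combining Lemma~\ref{lemm:Hol-ek} with the contraction estimates of Lemma~\ref{lemm:correlations}, and sum a geometric series. The one genuine refinement is the upfront centering $g=e_k-\mu(e_k)$, which makes the cross terms vanish algebraically; the paper instead expands around $e_k$ and is left with a residual bias term $\frac{1}{n^2}\big(\sum_{\ell}(\esp[e_k(X_\ell)]-\mu(e_k))\big)^2$ of size $\Hol_\alpha(e_k)^2/(n^2(1-\theta^\alpha)^2)$, which is precisely the only place the hypothesis $n\ge 1/(1-\theta^\alpha)$ is invoked. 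Your observation that the restriction on $n$ is not needed for the stated bound is therefore correct, and your handling of the complex-valued extension of Lemma~\ref{lemm:correlations} (real/imaginary splitting to get $\lVert g\rVert_\infty\lesssim\Hol_\alpha(g)$ and $\lVert\op{L}^t g\rVert_\infty\lesssim\Hol_\alpha(g)\theta^{\alpha t}$) is the right way to make rigorous what the paper leaves implicit.
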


\begin{proof}
We have:
\begin{align*}
\esp\big[\lvert \hat\mu_n(e_k)-\mu(e_k)\rvert^2\big]
  &= \esp\Big[ \Big(\frac1n\sum_{\ell=1}^n e_k(X_\ell)-\mu(e_k) \Big)^2 \Big] \\
  &=\frac{1}{n^2}\sum_{1\le \ell,m\le n} \esp[ e_k(X_\ell) e_k(X_m) ] - \frac2n \sum_{\ell=1}^n \esp[ e_k(X_\ell) ] \mu(e_k) + \mu(e_k)^2 \\
  &\le \frac{1}{n^2} \Big( \sum_{1\le \ell,m\le n} \esp[ e_k(X_\ell)] \esp[ e_k(X_m) ] + C\Hol_\alpha(e_k)^2 \, \theta^{\alpha \lvert \ell-m\rvert} \Big) \\
   &\qquad\qquad - \frac2n \sum_{\ell=1}^n \esp[ e_k(X_\ell) ] \mu(e_k) + \mu(e_k)^2  \\
   &\le \frac{C\Hol_\alpha(e_k)^2}{n^2} \sum_{1\le \ell,m\le n}  \, \theta^{\alpha \lvert \ell-m\rvert} + \frac{1}{n^2} \Big(\sum_{\ell=1}^n \big(\esp[ e_k(X_\ell) ] -\mu(e_k)\big)  \Big)^2 \\
      &\lesssim \frac{\Hol_\alpha(e_k)^2}{n^2}\cdot \sum_{\ell=1}^n 2\sum_{t=0}^\infty \theta^{\alpha t} + \frac{\Hol_\alpha(e_k)^2}{n^2}\Big(\sum_{\ell=1}^n \theta^{\alpha\ell} \Big)^2 \\
   &\lesssim \frac{\Hol_\alpha(e_k)^2}{n^2}\cdot \frac{n}{1-\theta^\alpha} + \frac{\Hol_\alpha(e_k)^2}{n^2(1-\theta^{\alpha})^2} \\
   &\lesssim \frac{\lvert k\rvert_\infty^{2\alpha}}{(1-\theta^\alpha)n}
\end{align*}
whenever $n\ge 1/(1-\theta^\alpha)$.
\end{proof}

Fix some threshold $J\ge 3$ and some exponent $\alpha\in(0,1]$, to be determined explicitly later on.

Let $f:\mathbb{T}^d \to \mathbb{R}$ be in $\Cku{s}$.
From the multidimensional version of Jackson's theorem \cite{schultz1969multivariate}, we know that there is a trigonometric polynomial $T_J(f)$ which is a linear combination of the $e_k$ for $\lvert k\rvert_\infty\le J$, such that
\[\lVert f-T_J(f)\rVert_\infty \lesssim \frac{1}{J^s}\]
We have no clear control on the coefficient of 
this optimal trigonometric polynomial, which need not be the Fourier coefficients of $f$.
But it is also known that the Fourier series of $f$ is within a factor $\simeq \lVert f\rVert_\infty (\log J)^d$ of the best approximation (see \cite{mason1980near-best} for an optimal constant), so that denoting by $F_J(f) := \sum_{\lvert k\rvert_\infty \le J} \hat f_k e_k$ the $J$-truncation of the Fourier series of $f$, we get
\[\lVert f-F_J(f)\rVert_\infty \lesssim \frac{(\log J)^d}{J^s}.\]

We can assume $\hat f_0=0$ by translating $f$, and what precedes yields:
\begin{align}
\lvert \hat\mu_n(f)-\mu(f)\rvert 
  &\le \lvert \hat\mu_n(f)-\hat\mu_n(F_J(f))\rvert + \lvert \hat\mu_n(F_J(f)) -\mu(F_J(f))\rvert + \lvert \mu(F_J(f))-\mu(f)\rvert \nonumber\\
  &\le  2\lVert f-F_J(f)\rVert_\infty + \sum_{0<\lvert k\rvert_\infty\le J} \lvert \hat f_k\rvert \lvert \hat\mu_n(e_k) - \mu(e_k)\rvert \label{eq:Markov-line2}\\
  &\lesssim \frac{(\log J)^d}{J^s}  + \Big( \sum_{0<\lvert k\rvert_\infty\le J} \lvert \hat f_k\rvert^2 \lvert k\rvert_\infty^{2s} \Big)^{\frac12} \Bigg(\sum_{0<\lvert k\rvert_\infty\le J} \frac{\lvert \hat\mu_n(e_k) - \mu(e_k)\rvert^2}{\lvert k\rvert_\infty^{2s}} \Bigg)^{\frac12} \nonumber\\
  &\lesssim \frac{(\log J)^d}{J^s} + \lVert f\rVert_{H^s} \Bigg( \sum_{0<\lvert k\rvert_\infty\le J} \frac{\lvert \hat\mu_n(e_k) - \mu(e_k)\rvert^2}{\lvert k\rvert_\infty^{2s}}\Bigg)^{\frac12} \nonumber\\
\lvert \hat\mu_n(f)-\mu(f)\rvert
    &\lesssim \frac{(\log J)^d}{J^s} + \Bigg( \sum_{0<\lvert k\rvert_\infty\le J} \frac{\lvert \hat\mu_n(e_k) - \mu(e_k)\rvert^2}{\lvert k\rvert_\infty^{2s}}\Bigg)^{\frac12} \label{eq:Markov-main}
\end{align}
Where the right-hand side does not depend on $f$ in any way (note that $\lVert \cdot\rVert_{H^s}$ is the Sobolev norm, controlled by the $\Ck{s}$ norm).
\begin{rema}
At line \eqref{eq:Markov-line2}, one could be tempted to bound directl $\lvert \hat f_k\rvert$ instead of using the Cauchy-Schwarz inequality, in order to make better use of our assumption on $f$. This would be effective if $\lvert \hat\mu_n(e_k)-\mu(e_k)\rvert$ were of the order of $1/n$, but it is actually of the order of $1/\sqrt{n}$, ultimately leading to a weaker bound than the one we aim for.
\end{rema}

Taking a supremum and an expectation in \eqref{eq:Markov-main} and using concavity, it comes: 
\begin{align*}
\esp\big[ \lVert \hat\mu_n - \mu \rVert_{\Cku{s}} \big] 
  &\lesssim \frac{(\log J)^d}{J^s} + \Bigg( \sum_{0<\lvert k\rvert_\infty\le J} \frac{\esp\big[ \lvert \hat\mu_n(e_k) - \mu(e_k)\rvert^{2} \big]}{\lvert k\rvert_\infty^{2s}}\Bigg)^{\frac12} \\
  &\lesssim \frac{(\log J)^d}{J^s} + \Bigg( \sum_{0<\lvert k\rvert_\infty\le J} \frac{ \lvert k\rvert^{2\alpha}}{(1-\theta^\alpha)n \lvert k\rvert_\infty^{2s}}\Bigg)^{\frac12} \\  
  &\lesssim \frac{(\log J)^d}{J^s} + \Bigg( \sum_{\ell=1}^J  \frac{\ell^{d-1+2\alpha-2s}}{(1-\theta^\alpha)n}  \Bigg)^{\frac12} 
\end{align*}
Choose now $\alpha =1/\log J$ so that $\ell^{2\alpha}\lesssim 1$ for all $\ell\in\{1,\dots, J\}$, use $1-\theta^\alpha \ge \alpha(1-\theta)$ and set $\bar n := (1-\theta)n$ to obtain
\begin{equation}
\esp\big[ \lVert \hat\mu_n - \mu \rVert_{\Cku{s}} \big] \lesssim \frac{(\log J)^d}{J^s} + \sqrt{\frac{\log J}{\bar n}} \Big(\sum_{\ell=1}^J \ell^{d-1-2s} \Big)^{\frac12}
\end{equation}

For $s < d/2$, we get:
\begin{align}  
\esp\big[ \lVert \hat\mu_n - \mu \rVert_{\Cku{s}} \big]
  &\lesssim \frac{(\log J)^d}{J^s} + \frac{(\log J)^{\frac12} J^{\frac{d}{2}-s}}{\sqrt{\bar n}}
  \label{eq:Markov-explicit-bound}
\end{align}
Trying to balance the contribution of the two terms, we first see that taking $J \simeq \bar n^{\frac1d}$ would optimize the power of $\bar n$ in the final expression; refining to $J=(\log \bar n)^\beta \bar n^{\frac1d}$, developing and ignoring lower order terms shows that the choice $\beta=2-\frac1d$ optimizes the final power of $\log \bar n$, and we thus set 
\[ J = \big\lfloor (\log \bar n)^{2-\frac1d} \bar n^{\frac1d} \big\rfloor \]

Any large enough $n$ (the bound depending on both $\theta$ and $d$) satisfies the requirement $n \ge 1/(1-\theta^\alpha)$ since the right-hand side is of the order of $\log n$. It then comes:
\[ \esp\big[ \lVert \hat\mu_n - \mu \rVert_{\Cku{s}} \big] \lesssim \frac{(\log \bar n)^{d-2s+\frac sd}}{\bar n^{\frac sd}} \qquad (n \text{ large enough}).\]

For $2s=d$ we get
\[ \esp\big[ \lVert \hat\mu_n - \mu \rVert_{\Cku{s}} \big] \lesssim \frac{(\log J)^d}{J^s} +  \frac{\log J}{\sqrt{\bar n}} \]
and taking $J = \lfloor \bar n^{\frac{1}{2s}} (\log \bar n)^{(d-1)/s} \rfloor$ yields
\[ \esp\big[\wass_1(\hat\mu_n,\mu)\big] \lesssim \frac{\log \bar n}{\sqrt{\bar n}}.\]

Finally, for $s > d/2$ we get
\[ \esp\big[ \lVert \hat\mu_n - \mu \rVert_{\Cku{s}} \big] \lesssim  \frac{(\log J)^d}{J^s} + \frac{(\log J)^{\frac12} }{\sqrt{\bar n}} \]
and taking $J = \lfloor \bar n^{\frac{1}{2s}} (\log \bar n)^{\frac{d}{s+1/2}} \rfloor$ yields
\[\esp\big[ \lVert \hat\mu_n - \mu \rVert_{\Cku{s}} \big] \lesssim \frac{(\log \bar n)^{\frac{d}{2s+1}}}{\sqrt{\bar n}}, \]
ending the proof of Theorem \ref{theomain:Markov}.

\section{Concentration near the expectancy}\label{sec:conc}

Let us detail how classical bounded martingale difference methods can be used to prove that the empirical measure concentrates very strongly around its expectancy. When $(X_k)_{k\ge 0}$ are independent identically distributed, this is long-known (see \cite{talagrand1992matching}, and also \cite{weed2017sharp} for more general Wasserstein metrics $\wass_p$, $p\ge1$). In the case of Markov chains, such arguments have been developed notably in \cite{chazotte2009concentration} and, in a dynamical context, \cite{chazottes2012optimal}. Our approach is very similar and thus cannot pretend to novelty, but we write it down to show how to handle functional spaces more general than just Lipschitz and H\"older.

The fundamental result to be used is the Azuma-Hoeffding inequality, which we recall.
\begin{theo*}[Azuma-Hoeffding inequality]
Let $Y$ be a random variable, let
\[\{\varnothing,\Omega\}=\mathscr{B}_0\subset \mathscr{B}_1\subset \dots \subset \mathscr{B}_n = \mathscr{B}(\Omega)\]
be a filtration and for each $k\in\llbracket 1,n\rrbracket$ set $\Delta_k = \esp[Y | \mathscr{B}_k]-\esp[Y | \mathscr{B}_{k-1}]$.
Assume that for all $k$ and some numbers $a_k\in\mathbb{R}$, $c_k>0$ we have $\Delta_k \in[a_k,a_k+c_k]$ almost surely. Then for all $t>0$,
\[\pr\big[ Y\ge \esp[Y] +t \big] \le \exp\Big(-\frac{2t^2}{\sum_k c_k^2} \Big).\]
\end{theo*}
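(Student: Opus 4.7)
The plan is to follow the standard Chernoff/exponential moment strategy, adapted to the martingale setting. First I would decompose $Y-\esp[Y]=\sum_{k=1}^n \Delta_k$; this telescoping identity is exact because $\mathscr{B}_0$ is trivial (so $\esp[Y\mid \mathscr{B}_0]=\esp[Y]$) and $\mathscr{B}_n$ carries $Y$ (so $\esp[Y\mid \mathscr{B}_n]=Y$). Then for any $\lambda>0$, Markov's inequality applied to the nonnegative random variable $e^{\lambda(Y-\esp[Y])}$ gives
\[\pr\big[Y\ge \esp[Y]+t\big]\le e^{-\lambda t}\,\esp\Big[e^{\lambda\sum_{k=1}^n \Delta_k}\Big],\]
and the problem reduces to controlling the moment generating function of the sum of martingale differences.

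The main technical ingredient, and what I see as the only nontrivial step, is \emph{Hoeffding's lemma}: if $X$ is a random variable with $\esp[X]=0$ taking values in an interval $[u,v]$, then $\esp[e^{\lambda X}]\le \exp\big(\lambda^2(v-u)^2/8\big)$. I would establish it by the standard convexity argument: dominating $e^{\lambda x}$ on $[u,v]$ by its chord, taking expectations to reduce to the cumulant $\varphi(\lambda)=\log\big(\tfrac{v}{v-u}e^{\lambda u}-\tfrac{u}{v-u}e^{\lambda v}\big)$, and showing via a Taylor expansion with explicit remainder that $\varphi''\le (v-u)^2/4$, so $\varphi(\lambda)\le \lambda^2(v-u)^2/8$.

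Given Hoeffding's lemma, I would peel off the conditional MGFs one step at a time using the tower property. Since $\esp[\Delta_k\mid \mathscr{B}_{k-1}]=0$ and $\Delta_k\in[a_k,a_k+c_k]$ a.s., applying Hoeffding's lemma conditionally on $\mathscr{B}_{k-1}$ yields $\esp[e^{\lambda\Delta_k}\mid \mathscr{B}_{k-1}]\le e^{\lambda^2 c_k^2/8}$ (crucially, the bound depends only on the deterministic length $c_k$, not on the random endpoint $a_k$). Writing
\[\esp\Big[e^{\lambda\sum_{k=1}^n\Delta_k}\Big]=\esp\Big[e^{\lambda\sum_{k=1}^{n-1}\Delta_k}\,\esp[e^{\lambda\Delta_n}\mid \mathscr{B}_{n-1}]\Big]\le e^{\lambda^2 c_n^2/8}\,\esp\Big[e^{\lambda\sum_{k=1}^{n-1}\Delta_k}\Big]\]
and iterating down to $k=1$ gives $\esp\big[e^{\lambda(Y-\esp[Y])}\big]\le \exp\big(\lambda^2\sum_k c_k^2/8\big)$.

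Finally I would plug this into the Chernoff bound and optimize in $\lambda$. Setting $\lambda=4t/\sum_k c_k^2$ minimizes $-\lambda t+\lambda^2\sum_k c_k^2/8$, and the minimum value is $-2t^2/\sum_k c_k^2$, producing the stated inequality. The rest is bookkeeping: everything else (measurability, integrability of the exponentials, which follows from the a.s. boundedness of each $\Delta_k$) is routine.
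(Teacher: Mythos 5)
The paper states the Azuma--Hoeffding inequality without proof: it is recalled as a classical black box at the start of Section~5, so there is no in-paper argument to compare against. Your proof is the standard and correct one — telescope $Y-\esp[Y]=\sum_k\Delta_k$, Chernoff-bound the exponential moment, control each conditional MGF via Hoeffding's lemma (noting correctly that the bound $e^{\lambda^2 c_k^2/8}$ depends only on the deterministic interval length $c_k$ and is therefore $\mathscr{B}_{k-1}$-measurably constant, so the tower-property peeling goes through), and optimize in $\lambda$; the arithmetic $\lambda=4t/\sum_k c_k^2$ giving exponent $-2t^2/\sum_k c_k^2$ checks out.
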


\subsection{The independent case}

In the case of i.i.d. random variables, the Azuma-Hoeffding inequality famously yields the following concentration inequality.
\begin{theo*}[McDiarmid's inequality]
Let $F:\Omega^n\to\mathbb{R}$ be a function such that for some $c_1,\dots,c_n$ and all $k\in\llbracket 1,n\rrbracket$ and all $(x_1,\dots,x_n,x_k')\in \Omega^{n+1}$ it holds
\[\big\lvert F(x_1,\dots,x_k,\dots,x_n) - F(x_1,\dots,x_k',\dots,x_n) \big\rvert \le c_k.\]
Let $(X_k)_{1\le k\le n}$ be a sequence of independent random variables. Then for all $t>0$ it holds
\[\pr\big[ F(X_1,\dots,X_n) \ge \esp[F(X_1,\dots,X_n)] +t \big] \le \exp\Big(-\frac{2 t^2}{\sum_k c_k^2} \Big).\]
\end{theo*}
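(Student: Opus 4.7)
The natural approach is the classical Doob martingale argument: build a martingale whose sum is the centered random variable $Y := F(X_1,\dots,X_n) - \esp[F(X_1,\dots,X_n)]$, verify that its differences are almost-surely contained in intervals of length $c_k$, and apply the Azuma--Hoeffding inequality just stated.

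Concretely, the plan is to introduce the filtration $\mathscr{B}_k = \sigma(X_1,\dots,X_k)$, with $\mathscr{B}_0 = \{\varnothing,\Omega\}$, and form the Doob martingale $M_k = \esp[F(X_1,\dots,X_n)\mid \mathscr{B}_k]$. Setting $\Delta_k = M_k - M_{k-1}$, independence of the $X_i$ lets me express both conditional expectations as partial integrals: defining
\[ G_k(x_1,\dots,x_k) := \esp\big[F(x_1,\dots,x_k,X_{k+1},\dots,X_n)\big], \]
I have $M_k = G_k(X_1,\dots,X_k)$ and $M_{k-1} = \esp[G_k(X_1,\dots,X_{k-1},X_k)\mid \mathscr{B}_{k-1}]$, the latter being an integral in $X_k$ only. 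This is where the independence hypothesis is crucial; without it, the inner conditional expectation would not factor so cleanly.

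Next, I would bound the oscillation of $\Delta_k$. Fix the past $X_1=x_1,\dots,X_{k-1}=x_{k-1}$ and define $\varphi(x) := G_k(x_1,\dots,x_{k-1},x)$. The bounded-differences hypothesis applied with the $k$-th coordinate implies $\lvert \varphi(x) - \varphi(x')\rvert \le c_k$ for all $x,x'$ (this is obtained by pulling the inequality outside the integral defining $G_k$, again using independence). Therefore $\varphi$ takes values in an interval of length $c_k$, and consequently
\[ \Delta_k = \varphi(X_k) - \esp[\varphi(X_k)\mid \mathscr{B}_{k-1}] \]
lies almost surely in an interval of length $c_k$ (the interval depends on the past, but its length does not).

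Finally I would invoke the Azuma--Hoeffding inequality. Since the bound on $\Delta_k$ is conditional on $\mathscr{B}_{k-1}$, strictly speaking one applies the standard conditional version of Hoeffding's lemma, $\esp[e^{\lambda\Delta_k}\mid \mathscr{B}_{k-1}]\le e^{\lambda^2 c_k^2/8}$, which is what drives the proof of Azuma--Hoeffding; iterating the tower property and optimizing in $\lambda>0$ yields the stated bound with $Y = F(X_1,\dots,X_n)$. The main subtlety to watch for is precisely this conditional-versus-deterministic distinction in the hypothesis of Azuma--Hoeffding as stated; beyond that, the argument is entirely mechanical once independence has been used to reduce the oscillation of $\Delta_k$ to a single-variable statement about $F$.
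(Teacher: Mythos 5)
The paper states McDiarmid's inequality without proof, as a classical corollary of Azuma--Hoeffding; your task was to supply the argument, and your proof is the standard Doob martingale construction and is correct: build $M_k = \esp[F\mid\mathscr{B}_k]$, use independence to write $M_k = G_k(X_1,\dots,X_k)$ with $G_k(x_1,\dots,x_k)=\esp[F(x_1,\dots,x_k,X_{k+1},\dots,X_n)]$, pull the bounded-difference hypothesis through the integral to bound the oscillation of $\varphi(x)=G_k(x_1,\dots,x_{k-1},x)$ by $c_k$, then apply Azuma--Hoeffding. You are right, and commendably attentive, to flag the predictable-versus-deterministic subtlety. As stated in the paper, Azuma--Hoeffding asks for $\Delta_k\in[a_k,a_k+c_k]$ almost surely with \emph{deterministic} $a_k$, but in the Doob martingale the length-$c_k$ interval $\bigl[\inf\varphi-\esp\varphi(X_k),\,\sup\varphi-\esp\varphi(X_k)\bigr]$ that contains $\Delta_k$ shifts with the realization of $X_1,\dots,X_{k-1}$; the smallest interval that works deterministically is only guaranteed to be $[-c_k,c_k]$, of length $2c_k$, which applied literally to the paper's statement would yield the weaker exponent $-t^2/(2\sum_k c_k^2)$, a factor $4$ off. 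The sharp McDiarmid constant requires the predictable version of Azuma--Hoeffding (allowing $\mathscr{B}_{k-1}$-measurable $a_k$), which is exactly what the standard proof via the conditional Hoeffding lemma $\esp[e^{\lambda\Delta_k}\mid\mathscr{B}_{k-1}]\le e^{\lambda^2 c_k^2/8}$ in fact establishes, as you indicate. So your argument is sound as written; the only caveat, which you yourself raise, is that it relies on this slightly stronger form of Azuma--Hoeffding than the one literally stated in the paper.
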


Applying this to
\[F(X_1,\dots,X_n) = \lVert \hat\mu_n - \mu \rVert_{\fspace{F}} = \sup_{f\in\fspace{F}} \Big\lvert \frac1n \sum_{k=1}^n f(X_k) -\mu(f) \Big\rvert\]
we can take 
\[c_k=\frac1n \sup_{f\in\fspace{F},x,x'\in\Omega} \lvert f(x)-f(x')\rvert =: \frac1n \osc(\fspace{F})\]
and it comes
\[\pr\big[ F(X_1,\dots,X_n) \ge \esp[F(X_1,\dots,X_n)] +t \big] \le \exp\Big(-\frac{2n t^2}{\osc(\fspace{F})^2}\Big).\]

For example if $\fspace{F}\subset \Lip_1(\Omega)$ (e.g. $\fspace{F}=\Cku{s}$) we have $\osc(\fspace{F}) \le \diam\Omega$; if moreover $\Omega = [0,1]^d$ it thus comes 
\begin{equation}
\pr\Big[ \lVert \hat\mu_n - \mu\rVert_{\fspace{F}} \ge \esp\big[\lVert \hat\mu_n - \mu\rVert_{\fspace{F}} \big] +t \Big] \le \exp\Big(-\frac{2}{d}\cdot n t^2\Big).
\label{eq:conciid}
\end{equation}
This, combined with Theorem \ref{theomain:reg}, yields good concentration estimates.

\begin{coro}
If $(X_k)_{k\ge 0}$ are i.i.d.random variables with law $\mu$, then for all $s\ge 1$, for some constant $C=C(d,s)>0$ (not depending upon $\mu$), all integer $n\ge 2$ and all $M \ge C$ we have:
\begin{itemize}
\item if $s>d/2$
\begin{equation}
\pr\Big[\lVert \hat\mu_n - \mu \rVert_{\Cku{s}} \ge \frac{M}{\sqrt{n}} \Big] \le e^{-\frac2d (M-C)^2};
\end{equation}
\item if $s=d/2$
\begin{equation}
\pr\Big[\lVert \hat\mu_n - \mu \rVert_{\Cku{s}} \ge \frac{M \log n}{\sqrt{n}} \Big] \le e^{-\frac2d (M-C)^2 (\log n)^2};
\end{equation}
\item if $s<d/2$
\begin{equation}
\pr\Big[\lVert \hat\mu_n - \mu \rVert_{\Cku{s}} \ge \frac{M}{n^{\frac{s}{d}}} \Big] \le e^{-\frac2d (M-C)^2 n^{1-2s/d}};
\end{equation}
\end{itemize}
\end{coro}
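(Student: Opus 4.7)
The plan is to combine the expectation bound from Theorem \ref{theomain:reg} with the bounded-difference estimate \eqref{eq:conciid}, exactly as was done in the comments surrounding Corollary \ref{coromain:conc}. Denote by $\phi_s(n)$ the rate appearing in Theorem \ref{theomain:reg}, i.e.\ $\phi_s(n)=1/\sqrt{n}$ when $s>d/2$, $\phi_s(n)=\log n/\sqrt{n}$ when $s=d/2$, and $\phi_s(n)=1/n^{s/d}$ when $s<d/2$; let $C=C(d,s)$ be the constant provided by that theorem, so that $\esp[\lVert \hat\mu_n-\mu\rVert_{\Cku{s}}] \le C\phi_s(n)$ for all $n\ge 2$.

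First I would check that \eqref{eq:conciid} applies to $\fspace{F}=\Cku{s}$. For $s\ge 1$ we have $\Cku{s}\subset\Cku{1}=\Lip_1$, hence $\osc(\Cku{s})\le \diam([0,1]^d)=\sqrt{d}$, which is exactly the hypothesis leading to \eqref{eq:conciid}. Potential measurability issues of the supremum are handled as in the introduction, by restricting to a countable uniformly dense subfamily of $\Cku{s}$. We therefore obtain, for every $t>0$,
\[\pr\Big[\lVert \hat\mu_n - \mu\rVert_{\Cku{s}} \ge \esp\big[\lVert \hat\mu_n - \mu\rVert_{\Cku{s}}\big] + t\Big] \le \exp\Big(-\frac{2}{d}\, n t^2\Big).\]

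Then, in each of the three regimes, I would specialize this by setting $t=(M-C)\phi_s(n)$, which is non-negative since $M\ge C$. By the choice of $C$ we get $\esp[\lVert \hat\mu_n-\mu\rVert_{\Cku{s}}] + t \le M\phi_s(n)$, so the event on the left-hand side above contains the event $\{\lVert \hat\mu_n-\mu\rVert_{\Cku{s}} \ge M\phi_s(n)\}$. A direct computation of $nt^2=n(M-C)^2\phi_s(n)^2$ yields $(M-C)^2$ when $s>d/2$, $(M-C)^2(\log n)^2$ when $s=d/2$, and $(M-C)^2 n^{1-2s/d}$ when $s<d/2$, and the three announced inequalities follow.

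There is essentially no real obstacle: the entire corollary is a repackaging of Theorem \ref{theomain:reg} and the bounded-difference inequality \eqref{eq:conciid}. The only points to keep track of are the choice of the constant $C$ (which is the one produced by Theorem \ref{theomain:reg}, hence depends only on $d$ and $s$) and the fact that $\Cku{s}$ is contained in $\Lip_1$ as soon as $s\ge 1$, so that the oscillation control needed in \eqref{eq:conciid} is automatic.
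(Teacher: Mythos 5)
Your proof is correct and follows exactly the route the paper takes: combine the expectation bound from Theorem~\ref{theomain:reg} with the McDiarmid-type inequality~\eqref{eq:conciid} (applicable since $\Cku{s}\subset\Lip_1$ on $[0,1]^d$, giving $\osc(\Cku{s})\le\sqrt{d}$), then substitute $t=(M-C)\phi_s(n)$ and compute $nt^2$ in each regime. The paper leaves this as an immediate consequence, and your write-up fills in exactly the intended details.
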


Similarly, with Theorem \ref{theomain:W1} we can obtain entirely explicit, non-asymptotic concentration bounds.

\subsection{Markov Chains}

To tackle Markov chains we will need some hypothesis to replace independence; we choose a framework that covers the case of $\wass_1$, but also more general dual metrics $\lVert\cdot\rVert_\fspace{F}$.

Assume that $\Omega$ is endowed with a metric $d$ with finite diameter ($d$ is assumed to be lower-semi-continuous, but not necessarily to induce the given topology on $\Omega$).
We still denote by $\Lip_1(\Omega)$ be the space of functions $\Omega\to\mathbb{R}$ which are $1$-Lipschitz with respect to $d$.

Let $(X_k)_{\ge 0}$ be a Markov chain on $\Omega$
which is exponentially contracting (see the beginning of Section \ref{sec:MC}) with constant $D$ and rate $\theta$, in the metric $d$ instead of the euclidean norm; this can be rewritten in a coupling formulation as follows: for all $x,x'\in\Omega$, all $i,t\in\mathbb{N}$ there are random variables $(X'_k)_{k\ge i}$ with the same law as $(X'_k)_{k\ge i}$ and such that for all $t$:
\[\esp[d(X_{i+t},X'_{i+t}) \mid X_i=x, X'_i=x'] \le D \theta^t d(x,x').\]
Note that the flexibility in the choice of $d$ enables to include uniformly ergodic Markov chains in this framework, simply by taking $d=\one_{\neq}$, i.e. $d(x,y)=0$ if $x=y$ and $d(x,y)=1$ otherwise.

%

Given a multivariate function $\Phi:\Omega^n\to\mathbb{R}^n$, we define as usual the coordinate-wise Lipschitz constants of $\Phi$ by
\[\Lambda_i(\Phi) = \sup_{x_1,\dots,x_n\in\Omega, x'_i\neq x_i} \frac{\lvert \Phi(x_1,\dots, x_i,\dots, x_n)-\Phi(x_1,\dots, x'_i,\dots,x_n) \rvert}{d(x_i,x'_i)}\]
and we say that $\Phi$ is separately Lipschitz if $\Lambda_i(\Phi)<\infty$ for all $i$ (when $d=\one_\neq$, the coordinate-wise Lipschitz constant become the coordinate-wise oscillations).

\begin{theo}\label{theo:conc}
Let $(X_k)_{k\ge1}$ be a Markov chain whose kernel is exponentially contracting with constant $D\ge 1$ and rate $\theta\in(0,1)$, with respect to a lower-semi-continuous distance $d$ on $\Omega$ giving it finite diameter $\diam(\Omega)$.

Let $n\in\mathbb{N}$ and $\Phi :\Omega^n\to\mathbb{R}$ be separately Lipschitz with constants $\Lambda_i(\Phi)\le \Lambda$. Then
\[\pr\Big[ \Phi(X_1,\dots,X_n) \ge \esp[\Phi(X_1,\dots,X_n)] + t \Big] \le  \exp\Big(-\frac{(1-\theta)^2 t^2}{2n D^2 \diam(\Omega)^2\Lambda^2}\Big) \]
\end{theo}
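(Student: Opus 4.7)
The plan is to apply the Azuma-Hoeffding inequality to the Doob martingale associated with $Y = \Phi(X_1,\dots,X_n)$ for the natural filtration $\mathscr{B}_k = \sigma(X_1,\dots,X_k)$, so that $\Delta_k = \esp[Y \mid \mathscr{B}_k] - \esp[Y \mid \mathscr{B}_{k-1}]$. The whole difficulty is collapsed into controlling, for each $k$, the conditional range of $\Delta_k$ given $\mathscr{B}_{k-1}$; once a bound $c_k \le c$ is available uniformly in $k$, Azuma-Hoeffding will immediately give $\pr[Y \ge \esp[Y] + t] \le \exp(-2t^2/(nc^2))$, which is the target up to adjusting the constant.

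To bound $\Delta_k$, I would freeze a realization $x_1,\dots,x_{k-1}$ of the past and define
\[
F_k(x) := \esp\bigl[\Phi(x_1,\dots,x_{k-1},x,X_{k+1},\dots,X_n) \bigm| X_k = x\bigr].
\]
By the Markov property, $\esp[Y\mid \mathscr{B}_k] = F_k(X_k)$, and $\esp[Y\mid\mathscr{B}_{k-1}] = \int F_k(y)\,\dd m_{x_{k-1}}(y)$; hence the conditional range of $\Delta_k$ is bounded by $\osc(F_k)$. Everything therefore reduces to a Lipschitz-type control on $F_k$.

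The core step is to exploit the contraction hypothesis through the coupling formulation recalled just before the statement: given two starting points $X_k = x$ and $X'_k = x'$, one builds simultaneous trajectories with $\esp[d(X_{k+t},X'_{k+t}) \mid X_k=x, X'_k=x'] \le D\theta^t d(x,x')$. Feeding these coupled trajectories into $\Phi$ and using the separate Lipschitz bounds $\Lambda_i(\Phi)\le \Lambda$ coordinate by coordinate, I expect to get
\[
|F_k(x) - F_k(x')| \le \Lambda\, d(x,x') + \sum_{t=1}^{n-k} \Lambda\, D\theta^t d(x,x') \le \frac{2D\Lambda}{1-\theta} d(x,x'),
\]
which yields $\osc(F_k) \le 2D\Lambda\diam(\Omega)/(1-\theta)$. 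Note that the geometric sum is what produces the decisive $(1-\theta)^{-1}$ factor; this is exactly where the Wasserstein contraction of the kernel enters.

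Taking $c_k = 2D\Lambda\diam(\Omega)/(1-\theta)$ for all $k$ gives $\sum_k c_k^2 = 4n D^2\Lambda^2 \diam(\Omega)^2/(1-\theta)^2$, and Azuma-Hoeffding then delivers precisely
\[
\pr\bigl[\Phi(X_1,\dots,X_n) \ge \esp[\Phi(X_1,\dots,X_n)] + t\bigr] \le \exp\!\Bigl(-\frac{(1-\theta)^2 t^2}{2n D^2 \diam(\Omega)^2 \Lambda^2}\Bigr).
\]
The only real obstacle is the coupling argument: one must verify that the coupled trajectories can indeed be plugged into $\Phi$ so that the separate Lipschitz constants act coordinate-wise and the expectations telescope correctly. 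Once this is set up cleanly (including the trivial case $d = \one_{\neq}$ where $d$ is lower-semi-continuous but not a topological metric), the rest is essentially automatic.
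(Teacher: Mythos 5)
Your proposal is essentially the paper's proof: you use the same Doob martingale for the natural filtration, bound each martingale increment by the oscillation of a conditional-expectation function via the coupling, telescope through the coordinates using the separate Lipschitz constants, and the geometric series produces the $(1-\theta)^{-1}$ factor before Azuma--Hoeffding is applied. (Your intermediate bound $|F_k(x)-F_k(x')|\le 2D\Lambda\,d(x,x')/(1-\theta)$ is loose by a factor of $2$ compared with what the geometric sum actually yields, but this exactly matches the $c_k \le 2\lVert V_k\rVert_\infty$ step in the paper and the final exponent agrees.)
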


\begin{proof}
We set $X=(X_1,\dots,X_n)$ and $X_{i:j} = (X_i,\dots,X_j)$ (meaning the empty family whenever $j< i$).

We shall apply the Azuma-Hoeffding inequality with the filtration $\mathscr{B}_k=\sigma(X_1^k)$, leaving us with the task of bounding the oscillations $c_k$ of the random variable
\[\Delta_k = \esp[\Phi(X) | X_{1:k}]-\esp[\Phi(X) | X_{1:k-1}].\]

Given an arbitrary $x_{1:k}=(x_1,\dots,x_k)\in\Omega^k$ and $x_k'\in\Omega$ we set
\[V_k(x_{1:k},x'_k) = \esp[\Phi(X) | X_{1:k}=x_{1:k}]-\esp[\Phi(X) | X_{1:k-1}=x_{1:k-1}, X_k=x'_k]\]
so that $c_k = \sup V_k - \inf V_k\le 2\lVert V_k\rVert_\infty$.
Let $(X'_i)_{i\ge k}$ be a copy of $(X_i)_{i\ge k}$ as in the definition of exponential contraction; then
\begin{align*}
V_k(x_{1:k},x'_k) &= \esp\big[\Phi(x_{1:k-1},X_{k:n}) \big| X_k=x_k\big]-\esp\big[\Phi(x_{1:k-1},X'_{k:n}) \big| {X'}_{k}=x'_{k}\big] \\
  &= \sum_{i=k}^{n} \esp\Big[\Phi(x_{1:k-1},X_{k:i},X'_{i+1:n}) -\Phi(x_{1:k-1},X_{k:i-1},X'_{i:n}) \Big| X_k=x_k,  X'_k=x'_k\Big] \\
\lvert V_k(x_1^k,x'_k) \rvert  &\le \sum_{i=k}^{n} \esp\big[\Lambda d(X_i,X'_{i}) \big| X_k=x_k,  X'_k=x'_k\big] \\
  &\le D\Lambda d(x_k,x'_k) \sum_{i=k}^{\infty} \theta^{i-k}  \\
c_k &\le 2C\Lambda \diam(\Omega)/(1-\theta).
\end{align*}
Applying the Azuma-Hoeffding inequality finishes the proof.
\end{proof}

\begin{rema}
The above inequality is probably not optimal; one can expect to improve the rate, either by moving the constant $2$ from the denominator to the numerator, or by replacing $(1-\theta)^2$ by $(1-\theta)$ (probably with another constant).
\end{rema}

As soon as $\fspace{F}\subset \Lip_1(\Omega)$ (e.g. $\fspace{F}=\Cku{s}$), Theorem \ref{theo:conc} applies to 
\[\Phi(X) = \lVert \hat\mu_n -\mu\rVert_{\fspace{F}} = \sup_{f\in\fspace{F}} \frac1n\sum_{k=1}^n f(X_k) -\mu(f)\]
with $\Lambda=\frac1n$, yielding
\begin{equation}
\pr\Big[ \lVert \hat\mu_n -\mu\rVert_{\fspace{F}} \ge \esp\big[\lVert \hat\mu_n -\mu\rVert_{\fspace{F}}\big] + t \Big] \le  \exp\Big(-\frac{(1-\theta)^2}{2 D^2 \diam(\Omega)^2} \cdot nt^2\Big)
\end{equation}
i.e., as in the independent case, subgaussian concentration. Corollary \ref{coromain:conc} follows.

\bibliographystyle{amsalpha}
\bibliography{empiric}

\end{document}